\date{\today}
\newcommand{\bbD}{{\mathbb{D}}}
\newcommand{\bbE}{{\mathbb{E}}}
\newcommand{\bbR}{{\mathbb{R}}}
\newcommand{\bbC}{{\mathbb{C}}}
\newcommand{\bbT}{{\mathbb{T}}}
\newcommand{\cP}{{\mathcal{P}}}
\newcommand{\cR}{{\mathcal{R}}}
\newcommand{\cE}{{\mathcal{E}}}
\newcommand{\cM}{{\mathcal{M}}}
\newcommand{\cB}{{\mathcal{B}}}
\newcommand{\fB}{{\mathfrak{B}}}
\newcommand{\e}{{\epsilon}}
\renewcommand{\l}{{\ell}}
\newcommand{\om}{{\omega}}
\newcommand{\fA}{{\mathfrak{A}}}
\newcommand{\z}{\zeta}
\renewcommand{\Re}{\text{\rm Re\,}}
\renewcommand{\Im}{\text{\rm Im\,}}
\newcommand{\sgn}{\text{\rm sgn}}
\allowdisplaybreaks \numberwithin{equation}{section}
\newtheorem{theorem}{Theorem}[section]
\newtheorem{lemma}[theorem]{Lemma}
\newtheorem{corollary}[theorem]{Corollary}
\theoremstyle{definition}
\newtheorem{remark}[theorem]{Remark}
\newtheorem{problem}[theorem]{Problem}
\title{On $L^1$ extremal problem for entire functions }
\author{ P. Yuditskii\thanks{Supported by the Austrian Science Fund FWF, project no: P22025-N18.}}
\date{\today}
\begin{document}
\maketitle

\begin{abstract}
We generalized the Korkin-Zolotarev theorem to the case of entire functions having the smallest $L^1$ norm on a system of intervals $E$. If $\bbC\setminus E$ is a domain of Widom type with the Direct Cauchy Theorem we give an explicit formula for the minimal deviation. Important relations between the problem and the theory of canonical systems with reflectionless resolvent functions are shown.

MSC: 30D15, 30F20, 34K08, 41A30, 41A50, 47B36. Keywords: Korkin-Zolotarev theorem, de Branges spaces, Martin function, Widom domains, approximation by entire functions, canonical systems, spectral theory.
\end{abstract}

\section{Introduction}

The classical $L^2-, L^1-,$ and $L^\infty$-extremal problems for polynomials and entire functions have numerous connections with diverse problem in analysis: the investigations of Abel on expressing elliptic and hyperelliptic integrals in elementary functions; continued fractions and orthogonal polynomials; spectral properties of periodic (and almost periodic) Jacobi matrices and Schr\"odinger operators; factorization of functions on Riemann surfaces; subharmonic majorants; special conformal mappings onto ``comb-like domains"; Pell's equations and so on.

 Probably the earliest version of the results we are interested in is the following Korkin-Zolotarev theorem (although implicitly it was already in \cite{TCH}).
 \begin{theorem} Let $\cP_n$ be the set of polynomials of the degree at most $n$. Then
 \begin{equation}\label{eq1}
  M:=  \inf_{P(x)=x^n+...\in\cP_n}\int_{-1}^1|P(x)|dx=\frac 1{2^{n-1}},
 \end{equation}
 and the extremal polynomial $U_n(x)$ is the Chebychev polynomial of the second kind, i.e.,
 \begin{equation*}
    U_n(x)=\frac{1/\z^{n+1}-\z^{n+1}}{2^n(1/\z-\z)},\quad x=\frac 1 2(1/\z+\z).
\end{equation*}
\end{theorem}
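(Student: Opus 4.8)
The plan is to establish the lower bound by a duality (sign-function) argument and then verify that $U_n$ attains it by a direct computation. Write $x=\cos\t$ with $\t\in[0,\pi]$, i.e.\ $\z=e^{i\t}$ in the given parametrization $x=\tfrac12(1/\z+\z)$. From the displayed formula one computes $U_n(\cos\t)=\frac{\sin((n+1)\t)}{2^n\sin\t}$, so that $U_n$ is indeed monic of degree $n$, and its sign on $(-1,1)$ is governed by $s(x):=\sgn U_n(x)=\sgn\sin((n+1)\t)$. The heart of the argument is the orthogonality relation
\begin{equation*}
  \int_{-1}^1 Q(x)\,s(x)\,dx=0\qquad\text{for every }Q\in\cP_{n-1}.
\end{equation*}

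First I would prove this orthogonality. Changing variables $x=\cos\t$ turns the integral into $\int_0^\pi Q(\cos\t)\,\sin\t\,\sgn\sin((n+1)\t)\,d\t$. Now $Q(\cos\t)\,\sin\t$ is a finite linear combination of the functions $\sin(m\t)$ with $1\le m\le n$, while the square wave $\sgn\sin((n+1)\t)$ has the Fourier sine expansion $\frac4\pi\sum_{j\ge0}\frac{\sin((2j+1)(n+1)\t)}{2j+1}$, which contains only frequencies $\ge n+1$. Hence the mutual orthogonality of $\{\sin(m\t)\}$ on $[0,\pi]$ forces the integral to vanish. This is the step I expect to be the main obstacle, since everything rests on matching the frequency content of the weight $\sin\t$ against the spectral gap in the sign pattern of $U_n$; the precise counting ($\deg Q\le n-1$ produces frequencies at most $n$, strictly below the lowest harmonic $n+1$ of the square wave) is exactly what makes the bound sharp.

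Granting the orthogonality, the lower bound is immediate. For any monic $P\in\cP_n$ the pointwise estimate $|P(x)|\ge P(x)s(x)$ (valid because $|s|\le1$) together with $P-x^n\in\cP_{n-1}$ gives
\begin{equation*}
  \int_{-1}^1|P(x)|\,dx\ge\int_{-1}^1 P(x)s(x)\,dx=\int_{-1}^1 x^n s(x)\,dx,
\end{equation*}
a quantity independent of $P$. Applying the same identity to $U_n$, for which $|U_n|=U_n\,s$ holds pointwise, shows that this common value equals $\int_{-1}^1|U_n(x)|\,dx$; thus $U_n$ is extremal and the infimum in \eqref{eq1} is in fact attained.

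Finally I would evaluate the extremal value directly. Using $U_n(\cos\t)=\sin((n+1)\t)/(2^n\sin\t)$,
\begin{equation*}
  \int_{-1}^1|U_n(x)|\,dx=\int_0^\pi\frac{|\sin((n+1)\t)|}{2^n\sin\t}\,\sin\t\,d\t=\frac1{2^n}\int_0^\pi|\sin((n+1)\t)|\,d\t=\frac1{2^{n-1}},
\end{equation*}
since the substitution $u=(n+1)\t$ yields $\int_0^\pi|\sin((n+1)\t)|\,d\t=\frac1{n+1}\int_0^{(n+1)\pi}|\sin u|\,du=2$ for every $n$. Combined with the lower bound this gives $M=2^{-(n-1)}$ and identifies $U_n$ as the extremal polynomial.
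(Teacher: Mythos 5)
Your proof is correct, and it is the classical argument for this theorem. Note, however, that the paper itself contains no proof of this statement: it is quoted in the introduction as classical background, and what the paper actually proves (Sections 3--5) is the far-reaching generalization to entire functions on infinite systems of intervals. So the honest comparison is with that general machinery, of which your argument is exactly the one-interval, polynomial shadow. Your key step --- orthogonality of the square wave $\sgn\sin((n+1)\theta)$ to the harmonics $\sin(m\theta)$, $1\le m\le n$, hence to $\{Q(\cos\theta)\sin\theta:\ Q\in\cP_{n-1}\}$ --- is the finite-dimensional counterpart of the paper's functional equation \eqref{9}, $w(z)=zF(z)(S(z)+M(\l))$, in which $S$ is the Cauchy transform of the sign of the extremal function; in both cases the content is that the sign of the extremal object annihilates the space of admissible perturbations, which upgrades the trivial estimate $\int|P|\ge\int Ps$ to a bound independent of the competitor. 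Your frequency count is right ($\deg Q\le n-1$ produces frequencies at most $n$, strictly below the lowest harmonic $(n+1)$ of the square wave), the termwise integration is legitimate since the square-wave Fourier series converges in $L^2$ and $Q(\cos\theta)\sin\theta$ is bounded, and the evaluation $\int_0^\pi|\sin((n+1)\theta)|\,d\theta=2$ is correct. The one omission is uniqueness, which the phrase ``the extremal polynomial'' implicitly asserts; it follows from your setup in two more lines: equality $\int_{-1}^1|P|\,dx=\int_{-1}^1 Ps\,dx$ forces $Ps\ge0$ a.e.\ on $[-1,1]$, so by continuity $P$ vanishes at the $n$ sign changes $x_k=\cos\bigl(k\pi/(n+1)\bigr)$ of $s$, and a monic polynomial of degree $n$ with these $n$ roots must equal $\prod_{k=1}^n(x-x_k)=U_n$.
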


Various generalizations were given by Stieltjes, Markov, Posse, Bernstein, Akhiezer and Krein,... The following two directions are the most important for us: we would like to pass from the polynomial case to classes of entire functions and from a single interval of integration  to a system of intervals, ideally to an arbitrary closed subset of $\bbR$.

A two interval version of the Korkin--Zolotarev problem was  investigated by N.I. Akhiezer \cite{AKH36}. 
It does not seem obvious that the famous Akhiezer's polynomials orthogonal on two intervals were 
first constructed in this paper (at least its \textit{r\'esum\'e}  does not contain any hint in this direction).
The several interval case was studied by Akhiezer and Krein in the following general setting \cite[Chapter VIII, Sect. 9]{KN}. 
\begin{problem}\label{prpextr} Let $E$ be a system of intervals, $E=[b_0,a_0]\setminus \cup_{j=1}^m(a_j,b_j)$. For a given real vector $\{\lambda_k\}_{k=0}^n$, $\sum \lambda_k^2>0$, define the functional on $\cP_n$,
\begin{equation*}
    \Lambda(P)=\sum \lambda_k c_k,\quad P(x)=\sum c_k x^k.
\end{equation*}
Find
\begin{equation*}\label{eq2}
   M=M(\Lambda)= \inf_{P\in\cP_n, \Lambda(P)=1}\int_E|P(x)|dx.
 \end{equation*}
\end{problem}
Of course very interesting partial cases of the problem correspond to the choices $\Lambda(P)=\Lambda_{x_0}(P)=P(x_0)$, $x_0\in \bbR\setminus E$, and $\Lambda_\infty$ when $\lambda_k=0$ for every $0\le k<n$ and $\lambda_n=1$, see \eqref{eq1}.

Due to the duality principle \cite{AK38} this problem is equivalent to the so called Markov's $L$ moment problem,
namely, $M(\Lambda)$ is equal to $1/L(\Lambda)$ in the following problem.
\begin{problem}\label{PLM} Find the smallest $L=L(\Lambda)>0$ such that the moment problem
\begin{equation*}\label{eq3}
    \int_E x^k f(x)dx=\lambda_k
\end{equation*}
is solvable with a real function $f(x)$ such that $|f(x)|\le L$, $x\in E$.
\end{problem}

Problem \ref{PLM} was investigated in \cite{AKH49}. One has to consider
$2^m$ sequences $\{s_k\}_{k=0}^{n+1}=\{s_k(\delta_1,...\delta_m)\}_{k=0}^{n+1}$, $\delta_j=\pm 1$ given by the expansions
\begin{equation}\label{eq4}
\frac{s_0}z+\frac{s_1}{z^2}+...+\frac{s_{n+1}}{z^{n+2}}+...=\sqrt{\frac{z-a_0}{z-b_0}}\prod_{j\ge 1}\sqrt{\frac{z-a_j}{z-b_j}}^{\delta_j}e^{\frac 1{2L}\left(\frac{\lambda_0}z+\frac{\lambda_1}{z^2}+...+\frac{\lambda_n}{z^{n+1}}+...\right)}
\end{equation}
Then $L$ is the smallest value for which all $\{s_k\}_{k=0}^{n+1}$'s are moments of positive measures supported
on $[b_0,a_0]$. There are  well known algebraic conditions for a sequence of numbers to be moments of a positive measure given in terms of positivity of the associated Hankel matrices, so one gets a system of algebraic relations  for $L$.

As it was found by F. Peherstorfer \cite{P88}  in the case of the functional $\Lambda_\infty$ the solution can be represented even explicitly in terms of polynomials orthogonal with respect to the Stieltjes function given by RHS of \eqref{eq4}. Indeed, the solution can be given by the orthogonal polynomial which has minimal $L^2$-norm among all possible choices $\delta_j\in\{\pm 1\}$. It is that (unique) orthogonal polynomial which has no zeros in the open gaps $(a_j,b_j)$ (all zeros in $E$). 
A connection between the $L^1$-extremal problem and orthogonal polynomials in the one interval  case  for a weighted problem was noted in \cite{AK61}.

There is an enormous amount of literature dealing with ``continuous analogues of orthogonal polynomials on a system of intervals" \cite{AKH61}, that is, $L^2$ extremal problems. In fact, such orthogonalization represents the key element in the spectral theory for periodic and almost periodic differential operators, particularly, Sturm-Liouville operators \cite{MAR}. In their turns,  they are especially important in the Theory of Integrable Systems, particularly, KdV, see e.g. \cite{MUM2}.

Concerning $L^\infty$ extremal problems  on closed subsets of $\bbR$ see e.g. the review \cite{SYU93}.
Going to a rigorous formulation of the $L^1$ extremal problem, first of all, we have to point out that the class of entire functions, in terms of their growth at infinity, depends essentially on the set $E$. For instance, in  the Sturm-Liouville theory, where the ``spectral set" $E$ is basically a half axis, it is required that $F(z^2)$ is an entire function of exponential type $\l$: a typical function is $\cos \sqrt{z} \l$.

Let $E$ be a system of intervals, which accumulate only at $-\infty$,
\begin{equation}\label{eq101.5}
    E=(-\infty,-1)\setminus \cup_{j\ge 1}(a_j,b_j), \quad a_j\to-\infty,  \ \text{as}\ j\to\infty.
\end{equation}
Let $G(z,z_0)$ be the Green function of the domain $\Omega=\bbC\setminus E$ with a logarithmic pole at $z_0\in\Omega$. In what follows we assume that $\Omega$ is of Widom type \cite{HA}, i.e.,
\begin{equation}\label{widomcond}
\sum_{c_j:\nabla G(c_j,0)=0} G(c_j,0)<\infty.
\end{equation}
The Martin function of the domain, see e.g. \cite{EY}, is defined as the following limit
$$
\cM(z)=\lim_{z_0\to+\infty}\frac{G(z,z_0)}{G(0,z_0)}
$$
Note that it meets the normalization $\cM(0)=1$.


 We say that an entire function $F(z)$ belongs to the class $\cB_E(\l)$ if
 \begin{itemize}
 \item[(i)]it is of bounded characteristic in the domain  $\Omega=\bbC\setminus E$,
 \item[(ii)] $\lim_{x\to\infty}\log|F(x)|/\cM(x)\le \l$.
 \end{itemize}

 We study the following extremal problem:
 \begin{problem}\label{pr1} For $\l\ge 0$ find
\begin{equation}\label{1}
    M(\l)=\inf\left\{\int_E |F(x)|\frac{dx}{|x|}: F\in \cB_E(\l), F(0)=1 \right\}.
\end{equation}
\end{problem}

Now we describe briefly its solution.
First, to the given set $E$ we associate the following family of Nevanlinna class functions (we say that $w(z)$ belongs to the Nevanlinna class if it is holomorphic in the upper half-plane and $\Im w(z)\ge 0$)
\begin{equation}\label{rth01}
  \cR=\left\{  R(z)=-\frac {\sqrt{1+z}} z\sqrt{\prod_{j\ge 1}\left(\frac{1-z/a_j}{1-z/b_j}\right)^{\delta_j}}:\delta_j=\pm 1\right\}.
\end{equation}
Note that for each of this function we have the following integral representation
\begin{equation}\label{rth02}
    R(z)=-\frac 1 z+q_0+
   \frac 1 \pi \int_E \left\{\frac 1{x-z}-\frac 1{x}\right\}{|R(x)|}\,dx.
\end{equation}

According to the famous de Branges Theorem \cite{dB} $R(z)$ is the resolvent function (respectively $|R(x)| dx$ is the spectral measure) of a certain canonical system.

\begin{theorem}\label{thdB} For the given $R(z)\in \cR$ there exists a unique (up to a monotonic change of the independent variable $t$) non-negative $2\times 2$ matrix $H(t)=H_R(t)$, $t\in[0,\infty)$, such that for all $t$ the function $R(z)$ possesses the representation
   \begin{equation}\label{repr0}
        R(z)=\frac{A(t,z)\cE(t,z)+B(t,z)}{C(t,z)\cE(t,z)+D(t,z)},
     \end{equation}
where $\cE(t,z)$ is a Nevanlinna class function and
\begin{equation}\label{debr0}
   \frac d{dt} \begin{bmatrix}A&B\\C&D
     \end{bmatrix}(t,z)\begin{bmatrix}0&-1\\1&0
     \end{bmatrix}=\begin{bmatrix} A&B\\C&D
     \end{bmatrix}(t,z)zH(t), \ \begin{bmatrix}A&B\\C&D
     \end{bmatrix}(0,z)=I.
\end{equation}
Moreover,
$$
R(z)=\lim_{t\to \infty}\frac{A(t,z)}{C(t,z)}=\lim_{t\to \infty}\frac{B(t,z)}{D(t,z)}
$$
\end{theorem}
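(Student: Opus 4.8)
The plan is to recognize $R(z)$ as a Herglotz (Nevanlinna) function and then to invoke, and unwind, the inverse spectral theory of canonical systems. First I would verify that every member of $\cR$ is of Nevanlinna class and admits the representation \eqref{rth02}: writing the branch of the square root so that $R$ is real on $\bbR\setminus E$ and tracking the change of argument across each interval of $E$ shows $\Im R(z)\ge 0$ in the upper half-plane, with boundary density $\frac1\pi|R(x)|$ on $E$ and a single simple pole at the origin. Thus $R$ is the Cauchy transform of the positive measure $d\mu(x)=\frac1\pi|R(x)|\,dx$ on $E$ (together with the point mass at $0$ coming from the $-1/z$ term), which is exactly the data to which de Branges' theorem \cite{dB} applies.

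Next I would produce the Hamiltonian $H(t)$. Following de Branges, one attaches to $\mu$ the space $L^2(d\mu)$ and the maximal totally ordered chain of its de Branges subspaces $\cH(E_t)$, each generated by an entire structure function $E_t$, with $\cH(E_s)\subset\cH(E_t)$ isometrically for $s<t$; the infinitesimal growth of this chain is encoded in a non-negative $2\times2$ matrix $H(t)$. Normalizing by $\tr H(t)\equiv1$ pins down $t$ up to the monotone changes permitted in the statement. The real and imaginary parts of the structure functions assemble into the transfer matrix $M(t,z)$ with entries $A,B,C,D$, entire in $z$, satisfying $M(0,z)=I$ and the canonical equation \eqref{debr0}; establishing that last differential identity for the chain is the technical heart of the construction.

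To obtain \eqref{repr0} I would use Weyl disk analysis. For fixed $t$ and $\Im z>0$ the transfer matrix $M(t,z)$ satisfies the $J$-inequality, with $J=\begin{bmatrix}0&-1\\1&0\end{bmatrix}$, that makes the M\"obius map $w\mapsto\frac{A w+B}{C w+D}$ carry the closed upper half-plane onto a disk $\Delta_t(z)$, and these disks are nested and decreasing in $t$. The given $R(z)$ lies in every $\Delta_t(z)$; taking its preimage to be a Nevanlinna function $\cE(t,z)$, the Weyl function of the system truncated beyond $t$, yields precisely \eqref{repr0}. Since the chain is maximal the half-line system is in the limit point case at $t=\infty$, so $\bigcap_t\Delta_t(z)$ is a single point; as $\frac A C$ and $\frac B D$ are the images of the boundary values $\cE=\infty$ and $\cE=0$, both converge to that point, namely $R(z)$, giving the stated limits.

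The main obstacle is the inverse step: the actual construction of $H(t)$ from $\mu$ and the verification of \eqref{debr0}, which rests on the full de Branges apparatus of entire structure functions and the continuous continuation of the chain rather than on a short computation. Uniqueness is comparatively soft, since the Weyl function recovers $\mu$ by Stieltjes inversion of \eqref{rth02}, and $\mu$ determines the chain $\{\cH(E_t)\}$, hence $H$, up to exactly the monotone reparametrization of $t$ asserted in the theorem.
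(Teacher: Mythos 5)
This theorem is not proved in the paper at all: it is imported as de Branges' theorem, introduced by ``According to the famous de Branges Theorem \cite{dB}\dots'' and stated without argument, so there is no internal proof to compare yours against. Your outline is, however, a faithful sketch of how the result is established in the literature: verify that each $R\in\cR$ is Herglotz with representing measure $\frac1\pi|R(x)|\,dx$ on $E$ plus the unit point mass at the origin coming from the $-1/z$ term (this is exactly \eqref{rth02}); build from $L^2(d\mu)$ the maximal totally ordered chain of de Branges subspaces; read off the trace-normalized Hamiltonian $H(t)$ from the growth of the chain; and then obtain \eqref{repr0} and the limits $A/C,\,B/D\to R$ from the nested Weyl disks together with the limit-point property (with $\tr H\equiv 1$ the limit-point case is what makes the parameter interval $[0,\infty)$ and shrinks $\bigcap_t\Delta_t(z)$ to the single point $R(z)$). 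Uniqueness up to monotone reparametrization via Stieltjes inversion is also the standard argument. The caveat is the one you flag yourself: the existence and total ordering of the chain and the verification of the canonical equation \eqref{debr0} are not side issues but the actual content of de Branges' theorem (his ordering theorem and the continuity of the chain), and your proposal leaves them as black boxes. As a roadmap your proposal is sound and consistent with the source the paper cites; as a self-contained proof it is incomplete at exactly the point where the paper, too, chooses to defer to \cite{dB}.
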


Note that in a certain sense the matrix
\begin{equation}\label{et0}
    \begin{bmatrix} B_R'(t,0)&-A_R'(t,0)\\ D_R'(t,0)&-C'_R(t,0)
    \end{bmatrix}(t)=
   \int_0^t H_R(t)dt
\end{equation}
represents  the ``matrix exponential type" of the entire matrix function
$$
\fA(t,z)=  \begin{bmatrix}A&B\\C&D
     \end{bmatrix}(t,z).
     $$

     Now, we choose
     \begin{equation}\label{tt0}
        t_0=\sup\{t:\lim_{x\to\infty}\ln |(AC)(t,x)|/ \cM(x)\le \l\},
     \end{equation}
     (the ``exponential type" is monotonic with respect to $t$). In this case
     \begin{equation}\label{report0}
        M(\l)=\inf_{R\in\cR}\ln\frac 1{{C'_R(t_0,0)}^2}
     \end{equation}
     and
     the extremal function of Problem \ref{pr1} is of the form
     \begin{equation}\label{sol0}
        F(z)=\frac{(AC)(t_0,z)}{zC'(t_0,0)}.
     \end{equation}
     
     The main results of the paper are given in Theorems \ref{th55} and \ref{th61}. If in a Widom domain the Direct Cauchy Theorem (DCT) holds (for the definition see Section 5) we obtain a very much explicit formula \eqref{pimain} for $M(\l)$. Analizing $L^1$ extremal problem in Widom domains when DCT fails, we obtain an important characteristic property for canonical system with reflectionless resolvent functions, see Theorem \ref{th62}.

\section{Preliminaries}

Let $\cM_*(z)$ be the conjugated harmonic function to $\cM$. It is well defined in the upper half-plane, so that
\begin{equation}\label{conjmart}
\Theta(z)=-\cM_*(z)+i\cM(z)
\end{equation}
is of Nevanlinna class.

Let
\begin{equation}\label{uc}
    z:\bbD/\Gamma\to \Omega, \ z(0)=0, \  z'(0)>0,
\end{equation}
be the universal covering of $\Omega$. Here $\Gamma$ is a suitable Fuchsian group.
Then $e^{i\l\Theta(z(\zeta))}$ is a character automorphic function in $\bbD$, i.e., 
\begin{equation}\label{eq23}
e^{i\l\Theta}\circ\gamma=\xi_\l(\gamma) e^{i\l\Theta}, \ \gamma\in \Gamma,
\end{equation}
the system of multipliers $\xi_\l=\{\xi_\l(\gamma)\}_{\gamma\in\Gamma}$ belongs to the group of characters $\Gamma^*$ of the discrete group $\Gamma$.

Due to the Widom theorem the condition \eqref{widomcond} for an arbitrary character $\alpha\in\Gamma^*$ the Hardy class of holomorphic bounded character automorphic functions is not trivial, that is $H^\infty(\alpha)$ contains a non-constant function. Later we will show that it implies that $\cB_E(\l)$ is not trivial. Now
let us prove existence of the extremal function in this case.

Recall that in the Widom case $z'(\zeta)$ is  an outer function and the Harmonic measure on $E$ has absolutely continuous density, which is also the module of the outer function. Therefore, any function $F$
of $\cB_E(\l)$ is of the form
\begin{equation}\label{3}
F(z(\zeta))=e^{-i\l\Theta(z(\zeta))}Z(\zeta) f(\zeta),
\end{equation}
where $Z(\zeta)$ is an outer function in $\bbD$, depending on $E$ only, and $f$ is a function of Smirnov class in $\bbD$,  moreover
\begin{equation}\label{4}
   \int_E |F(x)|\frac{dx}{|x|}=\int_\bbT|f(\zeta)|dm(\zeta).
\end{equation}
That is \eqref{3} sets the correspondence $F\mapsto f$ from $\cB_E(\l)$ to $H^1$ in the unite disk.

Let $\{F_n\}$ be an extremal sequence for $M(\l)$. In the corresponding sequence
$\{f_n\}$ we chose a subsequence that converges uniformly on compact subsets in $\bbD$,
$f(\z)= \lim f_{n_k}(\z)$. In this case
$$
\int_{\bbT}|f(r\z)|dm(\z)=\lim\int_E | f_{n_k}(r\z)|dm(\z)\le \lim_{n\to\infty}\|F_n\|=M(\l).
$$
Thus $f$ belongs to $H^1$, and hence $F(z(\zeta))=e^{-i\l\Theta(z(\zeta))}Z(\zeta) f(\zeta)\in \cB_E(\l)$.
Therefore,
$$
M(\l)\le \int_E |F(x)|\frac{dx}{|x|}=\int_{\bbT}|f(\z)|dm(\z)\le M(\l).
$$

We note that the classes $\cB_E(\l)$ are evidently monotonic with respect to $E$, $\cB_E(\l)\subset\cB_{E_1}(\l)$ if $E\subset E_1$. Indeed, if $F$ is of bounded characteristic in $\bbC\setminus E$ it is of bounded characteristic in the smaller domain. Also for a small extension of $E$
$$
\cM_{E_1}(z)=\frac{\cM_E(z)-\int_{E_1\setminus E}\cM_E(x)\omega_{E_1}(dx,z)}{{1-\int_{E_1\setminus E}\cM_E(x)\omega_{E_1}(dx,0)}}.
$$
Therefore
$$
\lim_{x\to+\infty}\frac{\log|F(x)|}{\cM_{E_1}(x)}=\left(1-\int_{E_1\setminus E}\cM_E(x)\omega_{E_1}(dx,0)\right)
\lim_{x\to+\infty}\frac{\log|F(x)|}{\cM_{E}(x)}\le \l.
$$

Thus $\cB_{{(-\infty.-1]}}(\l)$ is the maximal element of the family. If $F\in\cB_{\bbR_-}(\l)$, the function $F(-z^2)$ is of bonded characteristic in the upper/lower half-plane. This class of functions is described in details  in \cite[Chapter V]{LE}. 
In what follows a certain maximal principle in this class will be permanently in use.
\begin{lemma}\label{l21}
Let $F$ be an entire function of bounded characteristic in ${\bbC\setminus \bbR_-}$. If $F$ is bounded on the positive half-axis $\bbR_+$ then $F$ is a constant function.
\end{lemma}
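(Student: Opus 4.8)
The plan is to transplant the problem into the classical theory of entire functions of exponential type through the substitution $z\mapsto -z^2$ already flagged in the discussion above. I would set $G(z):=F(-z^2)$. The map $w=-z^2$ sends each of the half-planes $\{\Im z>0\}$ and $\{\Im z<0\}$ conformally onto the slit plane $\bbC\setminus\bbR_-$, and bounded characteristic is a conformal invariant; since $F$ is assumed of bounded characteristic in $\bbC\setminus\bbR_-$, the composition $G$ is of bounded characteristic in both half-planes (this is exactly the remark made just before the lemma). As a composition of entire functions $G$ is itself entire, so $G$ is an entire function of bounded type simultaneously in $\{\Im z>0\}$ and $\{\Im z<0\}$. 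Finally, on the imaginary axis $z=iy$ one has $-z^2=y^2\in\bbR_+$, so the hypothesis that $F$ is bounded on $\bbR_+$ says precisely that $G$ is bounded on the whole imaginary axis, say $|G(iy)|\le M$.

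Next I would invoke the structure theory of \cite[Chapter V]{LE} (in essence M.\,G.\ Krein's theorem): an entire function of bounded type in both half-planes is of exponential type and lies in the Cartwright class, so its indicator diagram is a (possibly degenerate) vertical segment. Concretely, with the mean types $\tau^{\pm}:=\limsup_{y\to+\infty}\log|G(\pm iy)|/y$, the indicator is $h_G(\theta)=\tau^+\sin\theta$ for $\theta\in[0,\pi]$ and $h_G(\theta)=-\tau^-\sin\theta$ for $\theta\in[-\pi,0]$; in particular $h_G(0)=h_G(\pi)=0$. The bound $|G(iy)|\le M$ forces $\tau^+\le 0$ and $\tau^-\le 0$, and feeding these into the indicator formula gives $h_G(\theta)\le 0$ for every $\theta$ (on $[0,\pi]$ as a product of a nonpositive and a nonnegative factor, on $[-\pi,0]$ likewise). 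Hence $G$ is of exponential type $0$.

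It then remains only to cash in the boundedness on a line. Since $G$ is of exponential type $0$ and bounded on the imaginary axis, I would apply the maximum principle for this class from \cite[Chapter V]{LE}: an entire function of exponential type $0$ that is bounded on a line is constant. (If one prefers to argue directly, rotate to $G_1(w):=G(iw)$, which is entire of exponential type $0$ and bounded on $\bbR$, hence of Cartwright class; the Poisson-type estimate $|G_1(u+iv)|\le M\exp(h_{G_1}(\pm\pi/2)\,|v|)$ with $h_{G_1}(\pi/2)=h_G(\pi)=0$ and $h_{G_1}(-\pi/2)=h_G(0)=0$ yields $|G_1|\le M$ on all of $\bbC$.) Thus $G$ is bounded, and by Liouville's theorem $G$, and therefore $F$, is constant.

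I expect the only genuine obstacle to be the bookkeeping of the half-plane theory: the data we are handed lives on $\bbR_+$, which under $z\mapsto -z^2$ becomes the \emph{interior} ray $i\bbR_+$ rather than the boundary of a half-plane, so a naive boundary maximum principle does not apply. The efficient way around this is exactly the mean-type computation above, which converts interior growth along $i\bbR$ into the vanishing of the indicator in the real directions and thereby into exponential type $0$. Getting the Cartwright indicator formula and its signs right is the one delicate point; everything else is a direct appeal to \cite[Chapter V]{LE} and to Liouville's theorem.
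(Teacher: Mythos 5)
Your proof is correct and takes essentially the same route as the paper: both transplant the problem via $G(z)=F(-z^2)$, use the Krein/Cartwright theory of \cite[Chapter V]{LE} to conclude from boundedness on the imaginary axis that $G$ has exponential type zero, and then bound $G$ on the whole plane by a Poisson/Phragm\'en--Lindel\"of estimate before applying Liouville. The only cosmetic difference is that the paper cites Theorems 4 and 11 of that chapter directly (type computed as $\lim_{y\to+\infty}\log|G(iy)|/y$, bounded type in the left/right half-planes, Poisson representation there), whereas you reconstruct the same facts through the Cartwright-class indicator and a rotation of variables.
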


\begin{proof}
 By Theorem 4 \cite[Chapter V]{LE}, $F(-z^2)$ is of exponential type $\sigma$, moreover
\begin{equation}\label{typezero}
\sigma=\lim_{y\to+\infty}\frac{\log|F(-(iy)^2)|}{y}=0.
\end{equation}

Since $F(-(iy)^2)$, $y\in R$, is bounded, it is also of bounded characteristic in the left/right half-plane, see Theorem 11 in this Chapter. Therefore, having in mind \eqref{typezero}, we get
$$
\log |F(-z^2)|\le \frac 1 \pi\int_\bbR\log |F(-(iy)^2)|\frac {|\Re z|}{|z-iy|^2}dy\le
\sup_{y\in \bbR} \log |F(-(iy)^2)|.
$$
\end{proof}

\section{Functional Equation for the Extremal Function}

\begin{lemma} The extremal function $F$ has simple real zeros on $E$. Moreover,
\begin{equation}\label{6}
\int_E\frac{|F(x)|}{x-x_0} dx=0,
\end{equation}
for   an arbitrary zero $x_0$,  $F(x_{0})=0$.
\end{lemma}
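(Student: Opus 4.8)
The plan is to read the lemma as the first-variation (Euler--Lagrange) analysis of the convex functional $\Phi(F)=\int_E|F(x)|\,\frac{dx}{|x|}$ on the linear space $\cB_E(\l)$ under the single affine constraint $F(0)=1$. First I would record that $\cB_E(\l)$ really is a linear space: both defining properties, bounded characteristic in $\Omega$ and the bound $\lim_{x\to\infty}\log|F(x)|/\cM(x)\le\l$, survive addition and scalar multiplication (for the latter because $\cM(x)\to\infty$, so $\log|c|/\cM(x)\to0$). Next, since $E\subset\bbR$ is symmetric, the reflection $F^\sharp(z)=\overline{F(\bar z)}$ again lies in $\cB_E(\l)$, satisfies $F^\sharp(0)=1$ and $|F^\sharp|=|F|$ on $E$; by convexity of $\Phi$ the symmetrization $\tfrac12(F+F^\sharp)$ is again extremal, so I may assume from the start that $F$ is real on $\bbR$. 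Then $\sgn F(x)$ is defined a.e.\ on $E$, and the real zeros of $F$ on $E$ are exactly the points where this sign flips.

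For any real $G\in\cB_E(\l)$ with $G(0)=0$ the perturbation $F+\e G$ is admissible for every $\e\in\bbR$. Using the elementary bound $\bigl||a+\e b|-|a|\bigr|\le|\e|\,|b|$, the difference quotients $(|F+\e G|-|F|)/\e$ are dominated by $|G|$, which is integrable against $\frac{dx}{|x|}$ because $G\in\cB_E(\l)$; and since the entire function $F$ has only isolated zeros, they converge a.e.\ on $E$ to $\sgn(F)\,G$. Dominated convergence yields a genuine two-sided derivative, and minimality forces it to vanish:
\[
\int_E \sgn\bigl(F(x)\bigr)\,G(x)\,\frac{dx}{|x|}=0
\]
for every admissible $G$. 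This single identity is the engine for the rest.

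The test functions are obtained by multiplying $F$ by rational factors that are harmless in $\Omega$. For a zero $x_0\in E$ set $G(z)=\dfrac{zF(z)}{z-x_0}$: the pole cancels the zero so $G$ is entire, $G(0)=0$, and the cofactor $z/(z-x_0)$ is holomorphic and bounded in $\Omega$ (its only pole $x_0$ lies in $E$, not in $\Omega$) with $\log|z/(z-x_0)|=o(\cM(x))$ as $x\to\infty$, so $G\in\cB_E(\l)$. Since $E\subset(-\infty,-1)$ we have $|x|=-x$, hence $\tfrac{x}{|x|}=-1$ and $\sgn(F)F=|F|$; substituting $G$ into the identity collapses it to $\int_E\frac{|F(x)|}{x-x_0}\,dx=0$, which is \eqref{6}. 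Simplicity comes from the same move one order higher: if $x_0\in E$ were a zero of order $\ge2$, then $G(z)=\dfrac{zF(z)}{(z-x_0)^2}$ is still entire and admissible, and the identity now reads $\int_E\frac{|F(x)|}{(x-x_0)^2}\,dx=0$, impossible because the integrand is nonnegative, locally integrable, and positive on a set of positive measure. Hence every zero on $E$ is simple.

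I expect the main friction to lie not in these formal computations but in their legitimacy: verifying that the manufactured $G$ genuinely belong to $\cB_E(\l)$ (bounded characteristic of the rational cofactor in the multiply connected $\Omega$, and the growth estimate against $\cM$ near $+\infty$), and justifying the first-order condition at the non-smooth locus of $|\cdot|$ and against the weight $\frac{dx}{|x|}$ on the unbounded set $E$ --- both handled above via isolated zeros and dominated convergence, but relying on the finiteness of $\int_E|F|\,\frac{dx}{|x|}$ guaranteed by membership in $\cB_E(\l)$. If one wants the stronger reading that $F$ has \emph{no} zeros off $E$, the variational identity alone does not suffice, because test functions $zF/(z-z_0)$ with $z_0\in\Omega$ are inadmissible (they acquire a genuine pole in $\Omega$); this would instead be argued through the correspondence $F\mapsto f\in H^1$ of the Preliminaries, where $F(0)=1$ becomes $f(0)=c_0\ne0$ and extremality pins down the zero divisor of $f$, the delicate point being the character bookkeeping needed to stay inside the class while removing a zero.
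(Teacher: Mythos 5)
Your variational mechanism is the same as the paper's: the paper perturbs multiplicatively, $F\mapsto F\{1+\e z/(z-x_0)\}$, which is exactly your additive perturbation with $G=zF/(z-x_0)$, and your derivation of \eqref{6} and of simplicity for zeros lying on $E$ is correct (and more carefully justified than in the paper). The genuine gap stems from the error in your closing paragraph. You assert that for a zero $z_0$ of $F$ off $E$ the test function $zF(z)/(z-z_0)$ is inadmissible because it ``acquires a genuine pole in $\Omega$''. It does not: since $F(z_0)=0$ the function is entire, just as for $z_0\in E$; and it is of bounded characteristic in $\Omega$, because the cofactor $z/(z-z_0)$ is --- its $\log^+$ modulus is bounded near $\partial\Omega$ and at infinity, and near $z_0$ it is dominated by $G(z,z_0)+O(1)$ with $G$ the Green function, which supplies a harmonic majorant; the growth condition (ii) is untouched since $\log|x/(x-z_0)|\to 0$ as $x\to+\infty$. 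Consequently \eqref{6} holds for an \emph{arbitrary} zero $x_0$ (for a nonreal zero one varies with complex $\e$, or uses that a real $F$ has zeros in conjugate pairs), which is precisely what the lemma asserts --- ``for an arbitrary zero'' --- and what the rest of the paper requires.

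Because you stop at zeros on $E$, the other half of the statement --- that \emph{all} zeros of $F$ are simple, real, and lie on $E$ --- is never proved. The paper extracts it from the full-strength \eqref{6}: subtracting \eqref{6} at two distinct zeros $x_0\neq x_1$ gives $\int_E |F(x)|\,dx/\bigl((x-x_0)(x-x_1)\bigr)=0$, and when $x_0,x_1$ form a conjugate pair, or coincide (a multiple zero, where one uses your second-order test function instead), or lie in the same gap $(a_j,b_j)$, the integrand is nonnegative and not a.e.\ zero --- a contradiction. This leaves at most one simple real zero per gap, and the paper removes that case by a variation of a different kind: replacing the factor $1-z/x_0$ by $1-zy_0$ with $1/y_0$ in the closed gap, the competing norm $\int_E(1-xy_0)|F(x)|/(1-x/x_0)\,dx$ is affine in $y_0$, hence attains its minimum at $y_0=1/a_j$ or $y_0=1/b_j$, so the zero can be pushed to a gap endpoint, contradicting extremality. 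Neither step appears in your proposal, and the substitute you gesture at (the $H^1$ correspondence from the Preliminaries) is not carried out. A smaller point: your symmetrization only produces \emph{some} real extremal function; to see that the given extremal $F$ is itself real, use the equality case in $\int_E|F+F^\sharp|\le\int_E(|F|+|F^\sharp|)$, which forces $F(x)\in\bbR$ a.e.\ on $E$ and hence $F\equiv F^\sharp$ by the identity theorem.
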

\begin{proof}
As usual in such cases, see e.g. \cite{AKH67, YU11}, we can use the following variation of the extremal function
\begin{equation*}\label{5}
    F(z)\mapsto F(z)\left\{1+\frac{\epsilon z}{z-x_0}\right\}.
\end{equation*}
where  $F(x_0)=0$.  In particular \eqref{6} implies that 
\begin{equation*}\label{6bis}
\int_E\frac{|F(x)|}{(x-x_0)(x-x_1)} dx=0,
\end{equation*}
for $F(x_0)=F(x_1)=0$,
that is, an arbitrary gap $(a_j,b_j)$ does not contain more than one zero $x_0$. If such zero exists let us consider the linear function
$$
g(y_0)=\int_E (1-x y_0)\frac{|F(x)|}{1-x/x_0} dx=0,\quad 1/y_0\in(a_j,b_j).
$$
It assumes its maximum on the boundary of the domain, therefore the correction
\begin{equation*}\label{new5}
    F(z)\mapsto F(z)\frac{1-z y_0}{1-z/x_0}
\end{equation*}
improves the extremal function either for $y_0=1/a_j$ or for $y_0=1/b_j$.
\end{proof}

\begin{theorem}
Let $F$ be an extremal function, and let
\begin{equation}\label{7}
    w(z)=\int_E|xF(x)|\left\{\frac 1{x-z}-\frac 1 x\right\}dx
\end{equation}
and
\begin{equation}\label{8}
    S(z)=\int_E\sgn (xF(x))\left\{\frac 1{x-z}-\frac 1 x\right\}dx
\end{equation}
Then
\begin{equation}\label{9}
    w(z)=zF(z)(S(z)+M(\l)).
\end{equation}
\end{theorem}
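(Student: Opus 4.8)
The plan is to prove the identity by showing that the combination
\[
\Psi(z):=w(z)-zF(z)\bigl(S(z)+M(\l)\bigr)
\]
is identically zero. First I would check that $\Psi$ is \emph{entire}. Since $\sgn(xF(x))\cdot xF(x)=|xF(x)|$ for a.e.\ $x\in E$, subtracting $zF(z)S(z)$ from $w(z)$ gives
\[
w(z)-zF(z)S(z)=\int_E\sgn(xF(x))\,\bigl(xF(x)-zF(z)\bigr)\Bigl\{\frac1{x-z}-\frac1x\Bigr\}\,dx .
\]
For each fixed $x$ the divided difference $\dfrac{xF(x)-zF(z)}{x-z}$ is an entire function of $z$ (as is $\dfrac{xF(x)-zF(z)}{x}$), and the integrals converge locally uniformly; hence $w-zFS$, and therefore $\Psi$, extends across $E$ to an entire function. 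Equivalently, the jumps of $w$ and of $zF(z)S(z)$ across $E$ produced by the Sokhotski--Plemelj formula coincide and cancel.

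Next I would locate enough zeros of $\Psi$ to divide out the factor $zF(z)$. At $z=0$ we have $w(0)=0$, so $\Psi(0)=0$. At any zero $x_0\in E$ of $F$ the factor $zF(z)$ vanishes, so $\Psi(x_0)=w(x_0)$; using $\frac1{x-x_0}-\frac1x=\frac{x_0}{x(x-x_0)}$ together with $|xF(x)|/x=-|F(x)|$ on $E$ one gets $w(x_0)=-x_0\int_E\frac{|F(x)|}{x-x_0}\,dx$, which is $0$ by \eqref{6}. Since the zeros of $zF(z)$ are simple and all lie among these points, $h(z):=\Psi(z)/\bigl(zF(z)\bigr)$ is again entire.

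It then remains to show $h\equiv 0$. The functions $w$ and $S$ are Cauchy-type transforms of (signed) measures on $E$: indeed $w$ is a Nevanlinna function ($\Im w(z)=\int_E|xF(x)|\,\Im\frac1{x-z}\,dx\ge0$ for $\Im z>0$) and $S$ is a difference of two Nevanlinna functions, so both are of bounded characteristic in $\bbC\setminus E$; combined with $F\in\cB_E(\l)$ this makes $h$ of bounded characteristic in $\bbC\setminus\bbR_-$. If $h$ is moreover bounded on $\bbR_+$, Lemma \ref{l21} forces $h$ to be a constant $c$. To identify $c$, write $\Psi=c\,zF$, i.e.\ $w(z)=zF(z)\bigl(S(z)+M(\l)+c\bigr)$, divide by $z$, and let $z\to0$: the left side tends to $-\int_E\frac{|F(x)|}{x}\,dx=\int_E|F(x)|\frac{dx}{|x|}=M(\l)$, while the right side tends to $F(0)\bigl(S(0)+M(\l)+c\bigr)=M(\l)+c$ because $F(0)=1$ and $S(0)=0$. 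Hence $c=0$ and \eqref{9} follows.

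The hard part will be the boundedness of $h$ on $\bbR_+$ needed to invoke Lemma \ref{l21}. Taken separately, $w(z)/\bigl(zF(z)\bigr)$ and $S(z)$ each grow (at worst logarithmically) as $z\to+\infty$, so one cannot estimate them termwise; the point is that these growths must cancel. I expect this to require exploiting the precise alternation of $\sgn(xF(x))$ coming from the simple real zeros of the extremal $F$ on $E$, together with the Widom condition \eqref{widomcond} and the control $\log|F(x)|\le\l\cM(x)$ defining $\cB_E(\l)$, so that $h$, being entire and of bounded characteristic, cannot grow along $\bbR_+$ and must be constant.
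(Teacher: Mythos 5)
Your construction coincides, step for step, with the paper's own proof: the paper sets $H(z)=w(z)-zF(z)S(z)$ (which differs from your $\Psi$ only by the term $M(\l)zF(z)$), checks that it is entire because the jumps across $E$ cancel, uses $H(0)=0$ and \eqref{6} to divide out $zF(z)$, and identifies the resulting constant at the origin exactly as you do, via $F(0)=1$, $S(0)=0$, $w'(0)=M(\l)$. All of that part of your proposal is correct. The genuine gap is your final step, and your diagnosis of what it requires is wrong. You insist on \emph{boundedness} of $h$ on $\bbR_+$ so as to invoke Lemma \ref{l21} verbatim, you correctly observe that termwise estimates only give logarithmic growth, and you then propose to extract a cancellation between $w/(zF)$ and $S$ from the sign alternation of $F$, the Widom condition, etc. That plan is both unproven and misdirected: the cancellation you are after is essentially \emph{equivalent} to the identity $h\equiv 0$ you are trying to prove (a posteriori $h$ is the constant $0$, so of course the logarithmic growths cancel), and no a priori proof of it is needed.

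The paper's resolution is that sublinear growth already suffices. It writes $G(z)=\frac{w(z)}{zF(z)}-S(z)$ and uses only $\lim_{x\to+\infty}G(x)/x=0$ before invoking Lemma \ref{l21}. This is legitimate because the proof of that lemma uses boundedness on $\bbR_+$ only to conclude (i) that $G(-z^2)$ has exponential type $\sigma=\lim_{y\to\infty}\log|G(y^2)|/y=0$, and (ii) that $G(-z^2)$ is of bounded characteristic in the left/right half-planes; both conclusions survive under $G(x)=O(\log x)$, since then $\log^+|G(y^2)|=O(\log\log |y|)$ is still Poisson-integrable and still gives type zero, after which the Poisson estimate bounds $G$ by slowly growing quantities in all of $\bbC$ and forces it to be constant. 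And the bound $G(x)=O(\log x)$ is exactly what the termwise estimates you dismissed provide: a direct computation gives
\begin{equation*}
|S(x)|\le\int_E\frac{x\,dt}{|t|\,(|t|+x)}\le\log(1+x),\qquad
\frac{w(x)}{x}=\int_E\frac{|F(t)|}{|t|+x}\,dt\le\int_E|F(t)|\frac{dt}{|t|}=M(\l),
\end{equation*}
with $w(x)/x\to0$ as $x\to+\infty$ by dominated convergence; moreover $F(x)\ge 1$ on $\bbR_+$, because the extremal $F$ has a genus-zero Hadamard product over its zeros $x_k\in(-\infty,-1]$ and each factor $1+x/|x_k|$ is $\ge 1$ there, so $w(x)/(xF(x))$ is in fact bounded. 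Hence $G(x)=O(\log x)=o(x)$ termwise, and the Liouville step closes with no cancellation argument at all. Your proof becomes complete once you replace the quest for boundedness of $h$ by the observation that Lemma \ref{l21} is applied, as in the paper, under the weaker hypothesis $\lim_{x\to\infty}G(x)/x=0$.
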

\begin{proof}
Define
$$
H(z)=w(z)-zF(z)S(z).
$$
By \eqref{7} and \eqref{8} this is an entire function. Moreover, $H(0)=0$ and  by \eqref{6} $H(x_k)=0$ as soon as
$F(x_k)=0$. Thus $H(z)=z{F(z)}G(z)$, where $G(z)$ is an entire function.
Since
$$
G(z)=\frac{w(z)}{zF(z)}-S(z),
$$
we have
$$
\lim_{x\to\infty}\frac 1 x {G(x)}=0.
$$
Due to Lemma \ref{l21} $G(z)$ is  constant. Since $F(0)=1$, $w'(0)=M(\l)$ and $S(0)=0$ we get \eqref{9}.

\end{proof}

\section{The Extremal Problem and Canonical Systems}

In addition to \eqref{7}, \eqref{8} we define
\begin{equation}\label{ompm0}
    \om_+(z)=\int_{E_+}\left\{\frac 1{x-z}-\frac 1 x\right\}dx,\ \
    \om_-(z)=\int_{E_-}\left\{\frac 1{x-z}-\frac 1 x\right\}dx,
\end{equation}
and
\begin{equation}\label{em0}
    \e_-(z)=\frac 1 2\int_{\bbR\setminus E}(1-\sgn F(x))\left\{\frac 1{x-z}-\frac 1 x\right\}dx.
\end{equation}
where $E_\pm=\{x\in E: \sgn F(x)=\pm 1\}$.
In this case $S(z)=-\omega_+(z)+\omega_-(z)$.

We define the Nevanlinna class functions
\begin{equation}\label{R0}
\begin{split}
{R(z)}=&-\frac {1}{z}e^{\frac 1 2(\om_+(z) +\om_-(z))+\e_-(z)}\\
=&-\frac{\sqrt{z+1}}z\sqrt{\prod_{j\ge 1}\frac {1-z/a_j}{1-z/b_j}}
\prod_{j\ge 1}\left(\frac {1-z/b_j}{1-z/a_j}\right)^{\delta_j},
\end{split}
\end{equation}
where
$$
\delta_j=\begin{cases}1, &\sgn F(x)=-1, \ x\in (a_j,b_j)\\
0, &\sgn F(x)=1, \ x\in (a_j,b_j)
\end{cases}
$$
and
\begin{equation}\label{bd00}
\begin{split}
\frac A C
(z)=&-\frac {1}{z}e^{\frac 1 2 M(\l)+\om_-(z)+\e_-(z)}\\
=&-\frac { \prod_{j\ge 1}(1-z/\nu_j)}{z\lambda_0
\prod_{j\ge 1}(1-z/\lambda_j)},\ \lambda_0= e^{-\frac 1 2 M(\l)}.
\end{split}
\end{equation}
Note that $A(z)C(z)=-\lambda_0 zF(z)$.

The main functional equation \eqref{9} implies the following asymptotical equality
for the difference  of these two functions
\begin{equation}\label{asrac0}
\begin{split}
    R(z)-\frac{A(z)}{C(z)}=&\frac{A(z)}{C(z)}\left\{e^{\frac 1 2(\om_+(z) -\om_-(z)-M(\l))}-1\right\}\\
    \sim &\frac{A(z)}{C(z)}\frac 1 2(\om_+(z) -\om_-(z)-M(\l))=
    -\frac{A(z)}{C(z)}\frac 1 2(S(z)+M(\l))\\
    =&\frac{A(z)}{C(z)}\frac{w(z)}{-2zF(z)}=\frac{\lambda_0 w(z)}{2C^2(z)},\quad z\to+\infty.
    \end{split}
\end{equation}

If $A$ and $C$ are polynomials, this relation  means, that the fraction $A/C$ is the Pade approximation for $R$. As it well known the Pade approximation can be expressed by means of orthogonal polynomials with respect to the measure associated with the function $R$. Thus Peherstorfer's  Theorem 6 \cite{P88} follows.

Having in mind the fundamental Theorem \ref{thdB}   we prove
\begin{theorem}\label{th41}
There exists an entire $2\times 2$ matrix function
$$
\fA(z)= \begin{bmatrix}A&B\\C&D
     \end{bmatrix}(z), \quad \det\fA(z)=1,
     $$
     such that
     \begin{equation}\label{jexp}
 \frac{\fA^*(z)J\fA(z)-J}{z-\bar z}\ge 0,\quad J=\begin{bmatrix}
 0&-1\\ 1&0
 \end{bmatrix},
\end{equation}
     and a Nevanlinna class function $\cE(z)$ ($\Im \cE(z)\ge 0$, $\Im z>0$) such that
     \begin{equation}\label{repr}
        R(z)=\frac{A(z)\cE(z)+B(z)}{C(z)\cE(z)+D(z)}.
     \end{equation}

\end{theorem}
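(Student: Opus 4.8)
The plan is to read Theorem \ref{th41} as a \emph{realization} statement: the concrete functions $A,C$ of \eqref{bd00} should occur as the first column of a de Branges matrix attached to $R$, and the whole matrix (its $J$-inner structure, $\det=1$, and the representation \eqref{repr}) should be extracted from the fundamental Theorem \ref{thdB}. The first step is to observe that the function $R$ of \eqref{R0} belongs to the family $\cR$ of \eqref{rth01}: the second line of \eqref{R0} is exactly the product occurring in \eqref{rth01}, with the signs $\delta_j\in\{\pm1\}$ dictated by the sign of $F$ on the gaps. Hence Theorem \ref{thdB} applies to $R$ and produces a non-negative Hamiltonian $H_R(t)$, the entire chain $\fA(t,z)=\begin{bmatrix}A&B\\C&D\end{bmatrix}(t,z)$ solving \eqref{debr0}, a Nevanlinna function $\cE(t,z)$, and the representation \eqref{repr0} for every $t$.

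Next I would record the two structural properties that hold all along the chain and are therefore inherited by any fixed slice $t=t_0$. Differentiating $\det\fA(t,z)$ along \eqref{debr0} and using $\tr\big(H(t)J\big)=0$ (since $H$ is symmetric and $J$ antisymmetric) gives $\det\fA(t,z)\equiv\det\fA(0,z)=1$. Differentiating the $J$-form and using $H(t)\ge 0$ yields the standard identity $\tfrac{d}{dt}\big(\fA J\fA^*\big)=(z-\bar z)\,\fA H\fA^*$, so that integration from $0$ (where $\fA=I$) produces $\tfrac{\fA J\fA^*-J}{z-\bar z}=\int_0^t \fA H\fA^*\ge 0$; the determinant normalization $\det\fA=1$ makes the two natural $J$-forms $\fA J\fA^*$ and $\fA^*J\fA$ interchangeable, giving \eqref{jexp}. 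Thus for any admissible $t_0$ the matrix $\fA(t_0,z)$ is automatically entire, $J$-inner, of determinant one, and \eqref{repr} holds with $\cE(z):=\cE(t_0,z)$ of Nevanlinna class; in particular the Nevanlinna property of $\cE$ comes for free once the correct slice is selected.

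The heart of the matter is then to identify that slice. Along the chain the Weyl remainder has the canonical shape $R(z)-A(t,z)/C(t,z)=\big(\text{Nevanlinna function}\big)/C(t,z)^2$, whereas the remainder for the concrete pair of \eqref{bd00} was computed in \eqref{asrac0} to be $\lambda_0 w(z)/\big(2C(z)^2\big)$, with $w$ of \eqref{7} a genuine Herglotz function (the Cauchy transform of the positive measure $|xF(x)|\,dx$). Matching this remainder structure places $(A,C)$ inside the chain, and its location is pinned down by the exponential type: since $A(z)C(z)=-\lambda_0 zF(z)$ and $F\in\cB_E(\l)$ is extremal, the growth of $(AC)$ relative to $\cM$ is exactly $\l$, which is precisely the defining condition \eqref{tt0} of $t_0$. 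Hence $A(t_0,z)=A(z)$ and $C(t_0,z)=C(z)$, and setting $B:=B(t_0,z)$, $D:=D(t_0,z)$ completes the matrix.

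The main obstacle is exactly this identification step: one must prove that a pair $(A,C)$ whose Weyl remainder has the form $\text{(Herglotz)}/C^2$ and whose type matches necessarily coincides with the chain entries $(A(t_0,\cdot),C(t_0,\cdot))$. This rests on a uniqueness statement for the de Branges chain (two $J$-inner completions sharing the same first column and the same order of contact with $R$ at infinity must agree), together with the verification that the $A,C$ of \eqref{bd00} are genuinely \emph{entire} — the prospective poles and zeros coming from the factors $\prod(1-z/\nu_j)$ and $\prod(1-z/\lambda_j)$ must cancel against the zeros of $F$ so that $A^2=F\prod\frac{1-z/\nu_j}{1-z/\lambda_j}$ and $C^2$ are squares of entire functions. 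The sign data encoded in $\om_+,\om_-,\e_-$ and the functional equation \eqref{9} are precisely what guarantee both the Herglotz sign of $w$ and this entire-ness, so the verification ultimately reduces to the extremality of $F$.
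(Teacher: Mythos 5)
Your reading of the theorem is right (the content is that the concrete $A,C$ of \eqref{bd00} admit a $J$-inner completion with $\det\fA=1$ and a Nevanlinna $\cE$ realizing $R$), and your steps extracting $\det\fA\equiv 1$ and \eqref{jexp} from the canonical-system ODE are correct but they are the trivial part. The genuine gap is the step you yourself flag as ``the main obstacle'': the claim that matching the remainder shape $R-A/C\sim \lambda_0 w/(2C^2)$ together with matching exponential type ``places $(A,C)$ inside the chain.'' That claim \emph{is} the theorem; deferring it to an unproven ``uniqueness statement for the de Branges chain'' leaves the proof empty. Asymptotic agreement at infinity plus equality of types does not identify functions: completions of a given first column with $AD-BC=1$ are non-unique (they differ by $(B,D)\mapsto(B-hA,\,D-hC)$ with $h$ real entire, shifting $\cE\mapsto\cE+h$), and membership of a given pair $(A,C)$ in the chain is an inverse-spectral statement equivalent to an isometric-embedding property that itself requires the hard analysis. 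Worse, your ``pinned down by the exponential type'' mechanism provably breaks in the generality needed: by the paper's own Theorem \ref{th62}, when DCT fails the correspondence between $t$ and the type $\l(t)$ is \emph{not} one-to-one (the type can be constant on a whole interval of $t$), and Theorem \ref{th41} is used precisely also in the non-DCT situation of Section 6.

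The paper's proof avoids the chain altogether and is constructive: starting from the formula \eqref{rhox} that $\Im\cE$ must satisfy, it \emph{defines} the density $\pi\rho(x)$ on $E$ and the point masses $\rho_j=2/(\lambda_0 w'(y_j))$ at the zeros $y_j$ of $CR-A$ in the gaps, and proves $\int_E\rho(x)\,dx/(1+x^2)<\infty$ and $\rho_j>0$, $\sum\rho_j/(1+y_j^2)<\infty$ by exploiting the functional equation \eqref{9} and the Herglotz property of $-1/w$ — this is where extremality of $F$ actually enters. Then $\cE$ is defined by the Herglotz integral \eqref{intrepr}, so it is Nevanlinna by construction (nothing ``comes for free from selecting a slice''), and $B,D$ are normalized by $B(0)=0$, $D(0)=1$. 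Finally the $J$-expanding property \eqref{jexp} is verified by hand in Lemmas \ref{le42}--\ref{l45}: one shows $\cE_-$ of \eqref{cem} is also Nevanlinna, derives the integral representations \eqref{intac0}--\eqref{intac2} via the Phragm\'en--Lindel\"of-type Lemma \ref{l21}, and exhibits $W(z)$ as a sum of nonnegative matrix terms. None of that analysis appears in your proposal; to make your route rigorous you would have to prove exactly these representations in order to establish the isometric inclusion that places $(A,C)$ in the chain, at which point you would have reproduced the paper's argument with the chain machinery as an unnecessary detour.
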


\begin{proof} If \eqref{repr} holds then
\begin{equation}\label{repr2}
     \cE(z)=\frac{D(z)R(z)-B(z)}{-C(z)R(z)+A(z)}.
\end{equation}
Since $AD-BC=1$, we have
\begin{equation}\label{rhox}
    \Im \cE(x)=\frac{\Im R}{|-C(x)R(x)+A(x)|^2}, \quad x\in\bbR.
\end{equation}
Now, let us define $\pi \rho(x)$, $x\in E$, by the RHS in \eqref{rhox}. We show that
\begin{equation}\label{rhoE}
    \int_E\rho(x)\frac{dx}{1+x^2}<\infty.
\end{equation}

By the  definition we have
\begin{equation*}
\begin{split}
\pi \rho(x)=&\frac{\Im R}{|-C(x)R(x)+A(x)|^2}=\frac{\Im (-\frac 1 xe^{\om_-+\e_-}e^{\frac 1 2(\om_+-\om_-)})}{|A|^2|RC/A-1|^2}\\
=&\frac{\Im (\frac A C e^{\frac 1 2(\om_+-\om_--M(\l))})}{|A|^2|e^{\frac 1 2(\om_+-\om_--M(\l))}-1|^2}.
\end{split}
\end{equation*}
Let us introduce the real valued functions
\begin{equation}\label{notow}
    \Re\om_\pm(x)=\alpha_\pm(x), \quad w(x)=u(x)+iv(x).
\end{equation}
Then, first of all, we have
$$
\frac{\Im R}{|-C(x)R(x)+A(x)|^2}=\frac{|A/C|e^{\frac 1 2(\alpha_+-\alpha_--M(\l))}}{|A|^2(e^{\alpha_+-\alpha_--M(\l)}+1)}=
\frac 1{2|AC|\cosh\frac{\alpha_+-\alpha_--M(\l)}2}.
$$
Secondarily, due to \eqref{9}
$$
u+iv=-\frac{AC}{\lambda_0}(\pm\pi i-\alpha_++\alpha_-+M(\l)).
$$
Therefore
$$
\frac{\sqrt{u^2+v^2}}v=\frac{\sqrt{\pi^2+(\alpha_+-\alpha_--M(\l))^2}}\pi
$$
and
$$
\frac {\lambda_0}{|AC|}=\frac
{\sqrt{\pi^2+(\alpha_+-\alpha_--M(\l))^2}}
{\sqrt{u^2+v^2}}=\frac{(\pi^2+(\alpha_+-\alpha_--M(\l))^2)v}{\pi(u^2+v^2)}.
$$
Thus
\begin{equation}
\frac{\Im R}{|-C(x)R(x)+A(x)|^2}= \frac{\pi^2+(\alpha_+-\alpha_--M(\l))^2}{2\lambda_0\cosh\frac{\alpha_+-\alpha_--M(\l)}2}\frac{v}{\pi(u^2+v^2)}.
\end{equation}
Since $-1/w$ is a Nevanlinna function the function ${v}/{\pi(u^2+v^2)}$ is integrable with respect to ${dx}/(1+x^2)$.

Now we discuss the point spectrum $\{y_j: y_j\in \bbR\setminus E\}$ related to $\cE(z)$. 
It has pole in the origin, moreover, since $A(z)C(z)=-z\lambda_0 F(z)$, and $A(0)=D(0)=F(0)=1$,
$$
\lim_{z\to 0}z\cE(z)=-\frac 1{1-\lambda_0}.
$$
 Otherwise, since $AC(y_j)\not=0$, we have  $(CR-A)(y_j)=0$.
We evaluate the corresponding mass
$$
\rho_j=\lim_{x\to y_j}(y_j-x)\cE(x).
$$
Using $AD-BC=1$ and \eqref{repr2}, we have
\begin{equation*}\label{rhodef2}
    \rho_j=\frac 1{C(y_j)(CR-A)'(y_j)}.   
\end{equation*}
Since
$$
CR-A=A(e^{\frac 1 2(\om_+-\om_--M(\l))}-1),
$$
we get
$(\om_+-\om_--M(\l))(y_j)=0$, that is, 
\begin{equation*}
\begin{split}
C(y_j)(CR-A)'(y_j)  =AC(y_j)(e^{\frac 1 2(\om_+-\om_--M(\l))}-1)'(y_j)\\
=
\frac 1 2 AC(y_j)(\om_+-\om_--M(\l))'(y_j).
\end{split}
\end{equation*}
On the other hand, by \eqref{9},
$
\lambda_0w=AC(\om_+-\om_--M(\l)).
$
Therefore
\begin{equation*}\label{rh3}
    \lambda_0 w'(y_j)=(AC)(y_j)(\om_+-\om_--M(\l))'(y_j),
\end{equation*}
and $\rho_j=\frac 2{\lambda_0 w'(y_j)}$.

Again, since $-1/w$ is a Nevanlinna function and $w(y_j)=0$ we get 
\begin{equation}\label{goal2}
    \rho_j>0\quad\text{and}\quad \sum \frac{\rho_j}{1+y_j^2}<\infty.
\end{equation}

Having \eqref{rhoE} and \eqref{goal2} for given $R, A, C$ and $w$ we define $\cE$ by the integral representation
\begin{equation}\label{intrepr}
\begin{split}
    \cE(z)=\frac{D(z)R(z)-B(z)}{-C(z)R(z)+A(z)}:=&-\frac 1 {(1-\lambda_0)z} +\tilde q_0+
    \int_E \left\{\frac 1{x-z}-\frac 1 x\right\}\rho(x)\,dx\\+&
    \sum\left\{\frac 1{y_j-z}-\frac 1{y_j}\right\}\rho_j,  
    \end{split}
\end{equation}
where $\tilde q_0$ is uniquely defined by the normalization in the origin $D(0)=1$, $B(0)=0$.

The fact that $\fA$ is $J$-expanding, that is  \eqref{jexp}, is proved in Lemma \ref{l45} below.
\end{proof}

Several next claims on the way to prove Lemma \ref{l45} are important on their own and we formulate them as separate Lemmas.

Note that  $\cE(z)$ is a function on the two-sheeted Riemann surface related to $R(z)$. Next Lemma describes its property in the extension on the lower sheet, i.e., for $R\to-R$.
In fact, the matrix
$$
\begin{bmatrix}
D(z)& B(z)\\C(z)&A(z)
\end{bmatrix}
$$
is $J$--inner simultaneously with the matrix $\fA$, therefore the function
\begin{equation}\label{cem}
    \cE_-(z)=\frac{D(z)R(z)+B(z)}{C(z)R(z)+A(z)}
\end{equation}
should be also of the Nevanlinna class. 

\begin{lemma}\label{le42} If $\cE$ is defined by \eqref{intrepr} then 
$\cE_-(z)$, defined by \eqref{cem},
is also of Nevanlinna class.
\end{lemma}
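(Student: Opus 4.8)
The plan is to verify directly that $\cE_-$ is holomorphic in the upper half-plane $\{\Im z>0\}$ and has non-negative imaginary part there, and then to read off a Herglotz representation for it exactly as \eqref{intrepr} was obtained for $\cE$. I deliberately avoid the $J$-inner heuristic of the preceding remark: the $J$-expanding property \eqref{jexp} of $\fA$ is only proved in Lemma \ref{l45}, and that proof rests on the present lemma, so using it here would be circular. Everything I need instead comes from the explicit factorizations \eqref{R0}, \eqref{bd00} and from the identity $A(z)C(z)=-\lambda_0zF(z)$.

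Set $2\theta=\om_+-\om_--M(\l)$, so that dividing \eqref{R0} by \eqref{bd00} gives $R=(A/C)e^{\theta}$ and hence
\begin{equation*}
C(z)R(z)+A(z)=A(z)\bigl(1+e^{\theta(z)}\bigr),\qquad -C(z)R(z)+A(z)=A(z)\bigl(1-e^{\theta(z)}\bigr).
\end{equation*}
Thus in $\bbC\setminus E$ a pole of $\cE_-$ can occur only at a zero of $A$ or where $e^{\theta}=-1$. Because $AC=-\lambda_0zF$ and $F$ has only simple real zeros on $E$, every zero of $AC$ lies in $\{0\}\cup E$; since $\det\fA=1$ prevents $A$ and $C$ from vanishing together and $A(0)=1$, all zeros of $A$ already sit on $E$. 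So $A$ is zero free off $E$, and holomorphy of $\cE_-$ in the upper half-plane reduces to showing $1+e^{\theta}\neq0$ there.

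This is the crux. From \eqref{ompm0} the functions $\om_\pm$ are Nevanlinna, with $\Im\om_\pm(z)=\Im z\int_{E_\pm}|x-z|^{-2}\,dx\ge0$, and since $E$ has non-empty gaps,
\begin{equation*}
\Im\om_+(z)+\Im\om_-(z)=\Im z\int_{E}\frac{dx}{|x-z|^2}<\Im z\int_{\bbR}\frac{dx}{|x-z|^2}=\pi .
\end{equation*}
As $M(\l)$ is real this forces $|\Im\theta(z)|=\tfrac12|\Im\om_+-\Im\om_-|\le\tfrac12(\Im\om_++\Im\om_-)<\pi/2$ for $\Im z>0$. Consequently $\arg e^{\theta}=\Im\theta\in(-\pi/2,\pi/2)$, i.e. $e^{\theta}$ maps the upper half-plane into the open right half-plane, so $\Re(1+e^{\theta})>1$ and indeed $1+e^{\theta}\neq0$. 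I expect this asymmetry to be the decisive point: the same bound does \emph{not} rule out $e^{\theta}=1$, which is precisely why $\cE$ carries the point spectrum $\{y_j\}$ of Theorem \ref{th41} while $\cE_-$ has no poles off the origin. It follows that $\cE_-$ is holomorphic in $\{\Im z>0\}$.

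Finally I would compute the boundary data. For $x\in E$ the reality of $A,C$ and $\det\fA=1$ give, as in \eqref{rhox}, $\Im\cE_-(x+i0)=\Im R/|CR+A|^2\ge0$; moreover on $E$ one has $\Im\theta=\pm\pi/2$, so $|1+e^{\theta}|=|1-e^{\theta}|$ and therefore $\Im\cE_-=\Im\cE=\pi\rho$ equals the density already shown integrable in Theorem \ref{th41}, while on the gaps $\cE_-$ is real. Near the origin $R\sim-1/z$, $C\sim-\lambda_0z$, $A(0)=1$ produce the admissible principal part $-1/((1+\lambda_0)z)$. Since $\cE_-$ is a quotient of functions of bounded characteristic it is itself of bounded characteristic in the upper half-plane; combined with holomorphy and the non-negative boundary values this yields the Herglotz representation
\begin{equation*}
\cE_-(z)=-\frac{1}{(1+\lambda_0)z}+\hat q_0+\int_E\Bigl\{\frac1{x-z}-\frac1x\Bigr\}\rho(x)\,dx,
\end{equation*}
with the same non-negative $\rho$ as for $\cE$, so $\cE_-$ is of Nevanlinna class. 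The passage from non-negative boundary values to this representation is the only remaining routine point, settled exactly as for $\cE$, the uniqueness being provided by the maximum principle of Lemma \ref{l21} applied to the difference of $\cE_-$ and the right-hand side.
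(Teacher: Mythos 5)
Your opening steps are correct and genuinely different from the paper. The estimate $\Im\om_+(z)+\Im\om_-(z)<\pi$ for $\Im z>0$, hence $|\Im\theta|<\pi/2$ and $\Re e^{\theta}>0$, is a clean direct proof that $CR+A=A(1+e^{\theta})$ has no zeros in the open upper half-plane; the paper never establishes (or needs) this, since its argument only checks $(CR+A)(x)\neq 0$ on the gaps. Your boundary computations are also right: $\Im\cE_-(x+i0)=\Im R/|CR+A|^2=\pi\rho(x)$ a.e.\ on $E$ because $|1+e^{\theta}|=|1-e^{\theta}|$ when $\Im\theta=\pm\pi/2$, reality on the gaps, and mass $1/(1+\lambda_0)$ at the origin.

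The gap is in your closing paragraph. The principle you invoke --- holomorphic in the upper half-plane, of bounded characteristic, non-negative imaginary part on the boundary, hence Herglotz --- is false without a growth hypothesis: $iz^2$ satisfies all three conditions, and so does $h(z)+cF(z)$ for your candidate $h$, the extremal function $F$ and suitable $c\neq 0$ ($F$ is entire, real on $\bbR$, of bounded characteristic in $\bbC\setminus E$), yet neither is of Nevanlinna class. To run your final step you must apply Lemma \ref{l21} to the entire function $g=\cE_--h$, and that lemma requires $g$ to be bounded (or, after dividing by $z$, at least $o(x)$) on $\bbR_+$. Nothing in your argument bounds $\cE_-$ on $\bbR_+$, and this cannot be ``settled exactly as for $\cE$'': there was no such estimate for $\cE$, which was simply \emph{defined} by \eqref{intrepr}. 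The needed bound comes from the identity
\begin{equation*}
\cE(z)+\cE_-(z)=\frac{2R(z)}{A^2(z)-C^2(z)R^2(z)},
\end{equation*}
which follows from \eqref{repr2}, \eqref{cem} and $AD-BC=1$ alone, combined with the asymptotics \eqref{asrac0}: on $\bbR_+$ the right-hand side behaves like $-2/(\lambda_0 w)$, which is bounded there since $w$ is positive and increasing on $\bbR_+$, whence $\cE_-(x)=o(x)$. But this is precisely the even/odd decomposition \eqref{cetwo}, \eqref{W} around which the paper's proof is organized and which you set aside as circular. It is not circular: it uses only $\det\fA=1$, never \eqref{jexp}. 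Without it, or a substitute growth estimate for $\cE_-$ on $\bbR_+$, your maximum-principle step does not go through.
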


\begin{proof}
Note that due to $\det \fA(z)=1$
\begin{equation}\label{cetwo}
\cE(z)=\frac{R(z)}{-C^2(z)R^2(z)+A^2(z)}+\frac{D(z)C(z)R^2(z)-A(z) B(z)}{-C^2(z)R^2(z)+A^2(z)}
\end{equation}
and consider the difference
\begin{equation}\label{intrepr2}
\begin{split}
 \frac{R(z)}{-C^2(z)R^2(z)+A^2(z)}
-&\left\{-\frac 1 {(1-\lambda^2_0)z} +
    \int_E \left\{\frac 1{x-z}-\frac 1 x\right\}\rho(x)\,dx\right.\\ +&
   \left. \frac 1 2 \sum\left\{\frac 1{y_j-z}-\frac 1{y_j}\right\}\rho_j\right\}. 
    \end{split}
\end{equation}
Both functions have the same imaginary part on $E$. For zeros $y_j$ of $A-CR$ in gaps we have
$$
\frac{R(y_j)}{(CR+A)(y_j)(CR-A)'(y_j)}= \frac 1 {2C(y_j)(CR-A)'(y_j)}=\frac 1 2\rho_j.
$$
Finally, $(CR+A)(x)\not=0$ in gaps since $e^{\frac 1 2(\omega_+-\omega_--M(\l))(x)}+1>0$.
Thus the difference in \eqref{intrepr2} is an entire function of bounded characteristic in $\bbC\setminus E$. To use Lemma \ref{l21} we have to estimate its grow on the positive half axis. By \eqref{asrac0}
$$
C^2(z)\left(R(z)-\frac{A(z)}{C(z)}\right)\frac{R(z)+\frac{A(z)}{C(z)}}{R(z)}\sim\lambda_0 w(z).
$$
Therefore the grow  is of the form $o(x)$, that is,
this entire function is  a constant, which we denote by $\tilde q_0^{(1)}$. 

As the result we get an integral representation for the first term in \eqref{cetwo}, as well as for the second one
$$
\frac{D(z)C(z)R^2(z)-A(z) B(z)}{-C^2(z)R^2(z)+A^2(z)}=
-\frac {\lambda_0} {(1-\lambda^2_0)z} +\tilde q_0^{(2)}+
 \sum\left\{\frac 1{y_j-z}-\frac 1{y_j}\right\}\frac{\rho_j} 2,
$$
where $\tilde q_0=\tilde q_0^{(1)}+\tilde q_0^{(2)}$. Thus
\begin{equation}\label{ceminu}
\begin{split}
\cE_-(z)=&\frac{R(z)}{-C^2(z)R^2(z)+A^2(z)}-\frac{D(z)C(z)R^2(z)-A(z) B(z)}{-C^2(z)R^2(z)+A^2(z)}
\\
=&-\frac 1 {(1+\lambda_0)z} +\tilde q_0^{(1)}-\tilde q_0^{(2)}+
    \int_E \left\{\frac 1{x-z}-\frac 1 x\right\}\rho(x)\,dx.
    \end{split}
\end{equation}
\end{proof}

\begin{lemma}
For  the given $R$, $A$, and $C$
\begin{equation}\label{intac0}
\int_E C^2(x) \frac{|R(x)|dx}{\pi (1+x^2)}<\infty, \quad \int_E A^2(x) \frac{dx}{\pi  |R(x)| (1+ x^2)}<\infty.
\end{equation}
Moreover
\begin{equation}\label{intac}
R(z) C^2(z)-A(z) C(z)=\int_E
\left\{\frac 1{x-z}-\frac 1 x\right\}
C^2(x) \frac{|R(x)|dx}{\pi}
\end{equation}
and
\begin{equation}\label{intac2}
A(z) C(z)-\frac{A^2(z)}{R(z)}=\int_E
\left\{\frac 1{x-z}-\frac 1 x\right\}
A^2(x) \frac{dx}{\pi |R(x)|}.
\end{equation}
\end{lemma}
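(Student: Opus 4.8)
The plan is to establish the two integrals and the two representations through the auxiliary functions
\[
\Phi(z)=C^2(z)R(z)-A(z)C(z),\qquad
\widetilde\Phi(z)=A(z)C(z)-\frac{A^2(z)}{R(z)} .
\]
First I would record their elementary analytic properties. Both are holomorphic on $\bbC\setminus E$ and real on $\bbR\setminus E$ (hence real-symmetric), since $R$, being of the form \eqref{R0}, has no finite zeros and only a pole at the origin. From $A(0)=1$ and $A(z)C(z)=-\lambda_0 zF(z)$ one gets $C(0)=0$, whence $\Phi(0)=\widetilde\Phi(0)=0$; and the factorizations $\Phi=C\,(CR-A)$, $\widetilde\Phi=\tfrac{A}{R}(CR-A)$ show that neither function has a pole at the gap points $y_j$ (the zeros of $CR-A$) nor anywhere else off $E$. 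On the interior of $E$ the exponent in \eqref{R0} has imaginary part $\pi/2$ (the term $\e_-$ is real there while $\Im(\om_++\om_-)=\pi$), so $R(x+i0)=i\,|R(x)|$ is purely imaginary; as $A,C$ are real on $E$ this yields the boundary values
\[
\Im\Phi(x+i0)=C^2(x)\,|R(x)|\ge 0,\qquad
\Im\widetilde\Phi(x+i0)=\frac{A^2(x)}{|R(x)|}\ge 0,\qquad x\in E .
\]

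The step I expect to be the main obstacle is the convergence \eqref{intac0}. Pointwise on $E$ the two integrands sum to $C^2|R|+A^2/|R|=|A-CR|^2/|R|=1/(\pi\rho)$, the reciprocal of the \emph{integrable} density $\rho$ of $\cE$; thus their sum is not manifestly integrable and the two terms must be handled at once. The resolution is to recognise this sum as the boundary imaginary part of a single Nevanlinna function. Indeed,
\[
\Phi(z)+\widetilde\Phi(z)=C^2(z)R(z)-\frac{A^2(z)}{R(z)}=-\frac{1}{g(z)},\qquad
g(z)=\frac{R(z)}{A^2(z)-C^2(z)R^2(z)},
\]
and $g$ is precisely the first summand of \eqref{cetwo}, which was shown in the proof of Lemma \ref{le42} to be a Herglotz function (a Cauchy transform of a positive measure). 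Hence $-1/g$ is again of Nevanlinna class; its Herglotz measure is finite against $dx/(1+x^2)$, and on $E$ its absolutely continuous density is $\tfrac1\pi\bigl(C^2|R|+A^2/|R|\bigr)$. Since both summands are nonnegative, each of the two integrals in \eqref{intac0} converges.

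Granting \eqref{intac0}, I would prove \eqref{intac} by the Phragm\'en--Lindel\"of device already used for Lemma \ref{le42}, and \eqref{intac2} verbatim. Put
\[
\Psi(z)=\int_E\Bigl\{\frac1{x-z}-\frac1x\Bigr\}\,C^2(x)\frac{|R(x)|}{\pi}\,dx ,
\]
a Herglotz function, real-symmetric and holomorphic off $E$, whose jump across $E$ equals $2i\,C^2|R|$, the same as that of $\Phi$. Consequently $\Phi-\Psi$ extends analytically across $E$ to an entire function; it is of bounded characteristic in $\bbC\setminus\bbR_-\supseteq\bbC\setminus E$, and by \eqref{asrac0} one has $\Phi=C^2(R-A/C)\sim\tfrac{\lambda_0}{2}\,w$ as $z\to+\infty$. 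Since $w$ and $\Psi$ are Nevanlinna functions with no linear term, their growth on $\bbR_+$ is $o(x)$, hence so is that of $\Phi-\Psi$; exactly as in the proof of Lemma \ref{le42}, Lemma \ref{l21} then forces $\Phi-\Psi$ to be constant, and evaluation at $z=0$ (where $\Phi(0)=\Psi(0)=0$) shows the constant is $0$. This is \eqref{intac}. For \eqref{intac2} one repeats the argument with $\widetilde\Phi$ in place of $\Phi$; the only point to check is the growth on $\bbR_+$, which follows from the identity $\widetilde\Phi=\tfrac{A/C}{R}\,\Phi$ together with $A/C=R-(R-A/C)\sim R$, so that $\tfrac{A/C}{R}\to1$ and again $\widetilde\Phi\sim\tfrac{\lambda_0}{2}\,w$.
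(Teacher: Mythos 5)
Your proposal is correct and follows essentially the same route as the paper: the paper also obtains \eqref{intac0} by observing that $(C^2R^2-A^2)/R$ is of Nevanlinna class --- it writes this function as $-\left\{(\cE+\cE_-)/2\right\}^{-1}$ in \eqref{W}, which is exactly your $-1/g$, since by \eqref{cetwo} and \eqref{ceminu} the average $(\cE+\cE_-)/2$ equals the first summand of \eqref{cetwo} --- and then reads off the integrability of the nonnegative boundary density $C^2|R|+A^2/|R|$ term by term. The representations \eqref{intac} and \eqref{intac2} are likewise proved in the paper by your argument: Lemma \ref{l21} together with the asymptotics \eqref{asrac0} forces the difference of the two sides to be a constant, which vanishes at the origin.
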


\begin{proof} We proved that  $\cE$ and $\cE_-$ are of Nevanlinna class.
Therefore
\begin{equation}\label{W}
   -\left\{\frac{\cE(z)+\cE_-(z)} 2\right\}^{-1}=\frac{C^2(z)R^2(z)-A^2(z)}{R(z)}.
\end{equation}
is also of this class.  Thus, its imaginary part is integrable and we get \eqref{intac0}. Hence, the integrals in \eqref{intac} and \eqref{intac2} are well defined.
Again, using Lemma \ref{l21} and the asymptotics \eqref{asrac0} we get that the difference between LHS and RHS in \eqref{intac} is a constant function. Since both functions vanish in the origin, we have \eqref{intac}. The same arguments prove \eqref{intac2}.
\end{proof}

\begin{corollary}
\begin{equation}\label{4main}
e^{\frac 1 2 M(\l)}=1+\int_E\left(\frac{C(x)}{C'(0) x}\right)^2\frac{|R(x)|dx}{\pi}.
\end{equation}
\begin{proof}
We divide both parts of \eqref{intac} by $z$ and then substitute $0$ for $z$. Recall that $C'(0)=-\lambda_0=
-e^{-\frac 1 2 M(\l)}$.
\end{proof}
\end{corollary}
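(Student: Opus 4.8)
The plan is to read the identity \eqref{4main} off directly from the integral representation \eqref{intac} by specializing at the origin. Since $E\subset(-\infty,-1]$, the point $z=0$ lies outside $E$ and in fact at distance $\ge 1$ from it, so the Cauchy kernel in \eqref{intac} is regular at $0$ and all manipulations there are legitimate. Concretely, I would divide both sides of \eqref{intac} by $z$ and let $z\to0$.

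For the right-hand side I would use the elementary identity
\[
\frac1z\left\{\frac1{x-z}-\frac1x\right\}=\frac1{x(x-z)},
\]
whose limit as $z\to0$ is $1/x^2$. Because $|x|\ge1$ on $E$, the kernel $1/(x(x-z))$ is bounded uniformly in $x\in E$ for $z$ near $0$, and $\int_E C^2|R|\,dx/(1+x^2)<\infty$ by \eqref{intac0}; hence dominated convergence produces $\int_E\bigl(C(x)/x\bigr)^2|R(x)|\,dx/\pi$ as the limit of the right-hand side.

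For the left-hand side I would extract the first-order behaviour at the origin of the three functions involved. From \eqref{R0} the exponent defining $R$ vanishes at $0$, so $zR(z)\to-1$; from $A(z)C(z)=-\lambda_0 zF(z)$ together with $A(0)=F(0)=1$ one reads $C(0)=0$ and $C'(0)=-\lambda_0$, so $C(z)=C'(0)z+O(z^2)$. Consequently $R(z)C^2(z)/z\to(C'(0))^2\cdot(-1)=-\lambda_0^2$, while $A(z)C(z)/z=-\lambda_0 F(z)\to-\lambda_0$, and therefore the left-hand side of \eqref{intac}, divided by $z$, tends to $-\lambda_0^2+\lambda_0=\lambda_0(1-\lambda_0)$.

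Equating the two limits yields $\lambda_0(1-\lambda_0)=\int_E\bigl(C(x)/x\bigr)^2|R(x)|\,dx/\pi$. Dividing through by $\lambda_0^2=(C'(0))^2$ and recalling $\lambda_0=e^{-\frac12 M(\l)}$ turns the left-hand side into $(1-\lambda_0)/\lambda_0=1/\lambda_0-1=e^{\frac12 M(\l)}-1$, which is exactly \eqref{4main}. I expect no genuine obstacle here: the only point deserving a word of care is the interchange of limit and integral on the right-hand side, and this is painless precisely because $0\notin E$ keeps the kernel bounded and \eqref{intac0} supplies the dominating integrability; everything else is the bookkeeping of the Taylor data of $R$, $A$, and $C$ at the origin.
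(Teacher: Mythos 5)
Your proposal is correct and follows exactly the paper's route: the paper's proof is the one-line instruction to divide \eqref{intac} by $z$ and substitute $z=0$, recalling $C'(0)=-\lambda_0=-e^{-\frac 1 2 M(\l)}$, and your argument is precisely this computation with the Taylor data of $R$, $A$, $C$ at the origin and the dominated-convergence justification (via \eqref{intac0} and $0\notin E$) written out in full.
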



\begin{lemma}\label{l45}
Let $A$ and $C$ be defined by \eqref{bd00} and $B$ and $D$ by  \eqref{intrepr}. These entire  functions form $J$-expanding matrix, that is, \eqref{jexp} holds.
\end{lemma}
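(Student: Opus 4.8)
The plan is to read \eqref{jexp} as a pointwise positivity of a Hermitian matrix and to feed it from the scalar Nevanlinna facts already proved. First note that $\fA^*(z)J\fA(z)-J$ is skew-Hermitian, since $J^*=-J$ and $\det\fA=1$; hence $K(z):=\dfrac{\fA^*(z)J\fA(z)-J}{z-\bar z}$ is Hermitian for $\Im z>0$, and \eqref{jexp} is exactly the claim $K(z)\ge 0$. Testing $K$ against $\begin{bmatrix}w\\1\end{bmatrix}$ (and against $\begin{bmatrix}1\\0\end{bmatrix}$, the limit $w\to\infty$) and using $\tfrac1{2i}\begin{bmatrix}\bar w&1\end{bmatrix}\fA^*J\fA\begin{bmatrix}w\\1\end{bmatrix}=|Cw+D|^2\,\Im\frac{Aw+B}{Cw+D}$ together with $\tfrac1{2i}\begin{bmatrix}\bar w&1\end{bmatrix}J\begin{bmatrix}w\\1\end{bmatrix}=\Im w$, I reduce $K(z)\ge 0$ to the scalar inequality $|Cw+D|^2\,\Im\frac{Aw+B}{Cw+D}\ge\Im w$ for every $w\in\bbC$, i.e. to positive semidefiniteness of the real quadratic form $g_z(w)=\Im\big[(Aw+B)\overline{(Cw+D)}\big]-\Im w$ on $\bbC\simeq\bbR^2$.

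Next I expand $g_z$: its leading ($|w|^2$) coefficient is $\Im(A\bar C)$, its value at $w=0$ is $\Im(B\bar D)$, and, using $AD-BC=\det\fA=1$, the off-diagonal coefficient is controlled by the identity $1+B\bar C-\bar AD=2i\big(D\,\Im A-B\,\Im C\big)$. Thus $g_z\ge 0$ is equivalent to the three scalar conditions $\Im(A\bar C)\ge 0$, $\Im(B\bar D)\ge 0$, and $\Im(A\bar C)\,\Im(B\bar D)\ge|D\,\Im A-B\,\Im C|^2$. Since $\Im(A\bar C)=|C|^2\Im(A/C)$ and $\Im(B\bar D)=|D|^2\Im(B/D)$, the first two are just the statements that $A/C$ and $B/D$ are of Nevanlinna class. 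For $A/C$ this is visible from the explicit factorization \eqref{bd00}, where $-A/C$ is a canonical product with real interlacing zeros $\{\nu_j\}$ and poles $\{0,\lambda_j\}$; for $B/D$ one uses that $B,D$ are pinned down by the integral representation \eqref{intrepr} of $\cE$ together with $D(0)=1$, $B(0)=0$ and $\det\fA=1$, which forces the same Herglotz (interlacing) structure. Equivalently, both ratios have the form $-\tfrac1z e^{h}$ with $\Im h$ confined to the admissible interval, exactly as for $R$ itself.

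The determinant inequality is the crux, and it is here that $\cE$, $\cE_-$ and \eqref{W} are needed. The essential point is that $J$-expansiveness is strictly stronger than mere preservation $\Im\frac{Aw+B}{Cw+D}\ge 0$ for $\Im w\ge 0$ (which would only yield $g_z\ge-\Im w$); the quantitative gain is carried by the Nevanlinna function $-\big(\tfrac{\cE+\cE_-}2\big)^{-1}=\frac{C^2R^2-A^2}{R}$ from \eqref{W}, with $\cE_-$ of Nevanlinna class by Lemma \ref{le42}. I would exploit the representations \eqref{intac} and \eqref{intac2}, which display $RC^2-AC$ and $AC-\tfrac{A^2}{R}$ as Cauchy transforms of the positive measures $C^2(x)|R(x)|\tfrac{dx}{\pi}$ and $A^2(x)\tfrac{dx}{\pi|R(x)|}$ on $E$, with the integrability \eqref{intac0}. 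Writing the determinant condition as nonnegativity of a $2\times2$ Gram-type form and substituting these representations, the inequality $\Im(A\bar C)\Im(B\bar D)\ge|D\,\Im A-B\,\Im C|^2$ turns into a Cauchy--Schwarz estimate between the two measures $C^2|R|\,dx$ and $A^2|R|^{-1}\,dx$, whose geometric mean is $|AC|\,dx$. Together with the two diagonal Nevanlinna facts this gives $g_z\ge 0$, hence $K\ge 0$ and \eqref{jexp}.

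The main obstacle I anticipate is precisely this determinant step: the diagonal entries only certify that $\fA$ sends the two boundary functions $\cE\equiv\infty$ and $\cE\equiv 0$ into the Nevanlinna class, whereas the expansion property is genuinely two-dimensional and is not controlled by any single one of $\cE$, $\cE_-$, or \eqref{W} in isolation. Making the Cauchy--Schwarz step rigorous — keeping track of the point masses $\rho_j$ in \eqref{intrepr} alongside the absolutely continuous parts, and handling the pole of $\cE$ at the origin together with the normalization $C'(0)=-\lambda_0$ — is where the care lies; the bounds \eqref{intac0} and \eqref{goal2} are exactly what make the argument converge.
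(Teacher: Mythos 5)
Your opening reduction is correct, and it is in fact the same first step as in the paper: for $\Im z>0$ the matrix $K(z)=\frac{\fA^*(z)J\fA(z)-J}{z-\bar z}$ is Hermitian, and \eqref{jexp} is equivalent to the three scalar conditions $\Im(A\bar C)\ge 0$, $\Im(B\bar D)\ge 0$, $\Im(A\bar C)\,\Im(B\bar D)\ge|D\,\Im A-B\,\Im C|^2$ (the paper encodes the same thing by writing $\fA^*J\fA-J=\Phi^*\Psi-\Psi^*\Phi$ with $\Phi=\left[\begin{smallmatrix}1&0\\ C&D\end{smallmatrix}\right]$, $\Psi=\left[\begin{smallmatrix}0&1\\ A&B\end{smallmatrix}\right]$). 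The genuine gap is the determinant condition, which you yourself identify as the crux but never actually prove, and the route you propose cannot prove it. The quantities $\Im(A\bar C)$, $\Im(B\bar D)$ and $D\,\Im A-B\,\Im C$ are sesquilinear in the entries of $\fA$; off the real axis they are not imaginary parts of holomorphic functions, so scalar Herglotz representations do not act on them directly. Worse, the representations \eqref{intac} and \eqref{intac2} that you want to substitute are representations of $RC^2-AC$ and $AC-A^2/R$, which involve only $A$, $C$, $R$ and carry no information whatsoever about $B$ and $D$; there is simply no pair of objects there to which a Cauchy--Schwarz estimate could be applied to produce $|D\,\Im A-B\,\Im C|^2$. (Your diagonal claims also have a soft spot: that $B/D$, equivalently $B/A$, is Herglotz is asserted by appeal to an ``interlacing structure forced'' by \eqref{intrepr}, but this is precisely something that must be proved; in the paper it requires Lemma \ref{l21}, the growth bound coming from \eqref{intac2}, and the values of $B,C$ at the zeros of $A$.)

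What is missing --- and what constitutes the actual content of the paper's proof --- is a single \emph{matrix-valued} Herglotz representation coupling all four entries. The paper passes to
\[
W(z)=-\Phi(z)\Psi(z)^{-1}=\frac{1}{-A(z)}\begin{bmatrix}-B(z)&1\\ 1&C(z)\end{bmatrix},
\]
whose entries are meromorphic with poles only at the real zeros $\nu_j$ of $A$; it shows $-C/A$ is Nevanlinna, controls $B/A$ via Lemma \ref{l21} and \eqref{intac2} (linear growth on $\bbR_+$ with $\lim_{z\to+\infty}B(z)/(zA(z))=\sigma_1>0$), and then expands
\[
W(z)=\begin{bmatrix}\sigma_1&0\\ 0&\sigma_2\end{bmatrix}z+\begin{bmatrix}0&1\\ 1&0\end{bmatrix}+\sum_{j\ge1}\frac{1}{-A'(\nu_j)}\begin{bmatrix}(1/C)(\nu_j)&1\\ 1&C(\nu_j)\end{bmatrix}\left\{\frac 1{z-\nu_j}-\frac 1{\nu_j}\right\}.
\]
Here $\det\fA=1$ evaluated at $A(\nu_j)=0$ forces $-B(\nu_j)=1/C(\nu_j)$, so each residue has determinant zero and is a nonnegative rank-one matrix; $\Im W\ge 0$, hence \eqref{jexp}, follows because a sum of nonnegative matrices is nonnegative. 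Your ``geometric mean'' intuition is exactly the rank-one structure of these atoms, but the relevant measure is atomic, supported on the zeros of $A$ (not the absolutely continuous measures $C^2|R|\,dx$ and $A^2|R|^{-1}\,dx$ on $E$), and it involves $B$ through the determinant identity. Until you construct this matrix representation (or an equivalent coupling of $B,D$ to $A,C$), your argument establishes only the two diagonal conditions, which, as you note yourself, are strictly weaker than \eqref{jexp}.
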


\begin{proof}
\eqref{jexp} is equivalent to
$$
\frac{\Phi(z)^*\Psi(z)-\Psi(z)^*\Phi(z)
}{z-\bar z}\ge 0,
$$
where
$$
\Phi(z)=
\begin{bmatrix}0&0\\ 0&1
\end{bmatrix}\fA(z)+
\begin{bmatrix}1&0\\0&0
\end{bmatrix}=
\begin{bmatrix}1&0\\ C(z)&D(z)
\end{bmatrix}
$$
and
$$
\Psi(z)=
\begin{bmatrix}0&0\\1&0
\end{bmatrix}\fA(z)+
\begin{bmatrix}0&1\\0&0
\end{bmatrix}
=
\begin{bmatrix}0&1\\ A(z)&B(z)
\end{bmatrix},
$$
that is, to the fact that the $2\times 2$ matrix function
$$
W(z):=-\Phi(z)\Psi(z)^{-1}=
-\begin{bmatrix}1&0\\ C(z)&D(z)
\end{bmatrix}\begin{bmatrix}0&1\\ A(z)&B(z)
\end{bmatrix}^{-1}.
$$
has positive imaginary part. Using $AB-CD=1$ we have
$$
W(z)=\begin{bmatrix}1&0\\ C(z)&D(z)
\end{bmatrix}\begin{bmatrix}(B/A)(z)&-(1/A)(z)\\ -1&0
\end{bmatrix}=
\frac{
\begin{bmatrix}-B(z)&1\\ 1&C(z)
\end{bmatrix}}{-A(z)}.
$$

Note that $-C/A$ belongs to the Nevanlinna class. We have to show that $B/A$  has also  positive imaginary part in the upper half plane. We use
$$
\cE(z)=\frac{R(z)}{A(z)(-C(z)R(z)+A(z))}-\frac{B(z)}{A(z)}.
$$
By \eqref{intac2} the function $AC-A^2/R$ belongs to the Nevanlionna class, as well as the reciprocal $R/(-AC R+A^2)$.  Thus $B/A$ groves not faster than a linear function on the positive half axis, moreover
$$
\lim_{z\to+\infty}\frac{B(z)}{zA(z)}=
\lim_{z\to+\infty}\frac{R(z)}{zA(z)(-C(z)R(z)+A(z))}=\sigma_1>0.
$$

By Lemma \ref{l21} and $B(\nu_j)C(\nu_j)=1$, the function $B/A$ is an entry in the following matrix representation
$$
W(z)=
\begin{bmatrix}\sigma_1&0\\ 0&\sigma_2
\end{bmatrix}z+
\begin{bmatrix}0&1\\ 1&0
\end{bmatrix}+
\sum_{j\ge 1}\frac{\begin{bmatrix}(1/C)(\nu_j)&1\\ 1&C(\nu_j)
\end{bmatrix}}{-A'(\nu_j)}\left\{\frac{1}{z-\nu_j}-\frac{1}{\nu_j}\right\}.
$$
Since the matrices 
$$
\frac{\begin{bmatrix}(1/C)(\nu_j)&1\\ 1&C(\nu_j)
\end{bmatrix}}{-A'(\nu_j)}\ge 0,\ j\ge 1, \quad \begin{bmatrix}\sigma_1&0\\ 0&\sigma_2
\end{bmatrix}\ge 0
$$
are nonegative, we obtain $(W(z)-W(z)^*)/(z-\bar z)\ge 0$, and the lemma is proved.
\end{proof}

\section{In the presence of DCT}
We say that a function $F$ of bounded characteristic belongs to the Smirnov class  if its  singular component is bounded. 

Let $\Omega=\bbC\setminus E$ be of Widom type. The Direct Cauchy Theorem (DCT) holds in
$\Omega$ if
\begin{equation}\label{eqdct1}
\frac 1{2\pi i}\int_E F(x+i0)\frac{dx} x-\frac 1{2\pi i}\int_E F(x-i0) \frac{dx}{x}=F(0)
\end{equation}
for every function $F$ of Smirnov class such that 
\begin{equation}\label{eqdct2}
\int_E |F(x+i0)|\frac{dx}{|x|}+\int_E |F(x-i0)| \frac{dx}{|x|}<\infty.
\end{equation}

If $\bbC\setminus E\simeq \bbD/\Gamma$ is of Widom type, then there exists a measurable fundamental set $\bbE$ (with respect to the Lebesgue measure $dm$) for the action of the group $\Gamma$ on $\bbT$ \cite{POM}, i.e.,
\begin{equation*}
\begin{split}
1)&\bbE\cap\gamma(\bbE)=\emptyset, \quad \gamma\not=I\\
2)&m(\cup_{\gamma_\in\Gamma}\gamma(\bbE))=m(\bbT).
\end{split}
\end{equation*}
For an analytic function $f$ in $\bbD$ we write
$$
f|[\gamma](\zeta)=\frac{f(\gamma(\zeta))}{\gamma_{21}\zeta+\gamma_{22}}, \quad\gamma=
\begin{bmatrix}
\gamma_{11}&\gamma_{12}\\
\gamma_{21}&\gamma_{22}
\end{bmatrix}.
$$

For $\alpha\in\Gamma^*$, the space $A_1^2(\alpha)$ is formed by Smirnov class functions $f$ in $\bbD$ such that
$$
f|[\gamma]=\alpha f,\ \forall\gamma\in\Gamma,\quad
\|f\|^2=\int_{\bbE}|f|^2dm<\infty.
$$
The point evaluation functional is bounded in this space and we denote by $k^\alpha(\zeta)$ the reproducing kernel in the origin
$$
\langle f, k^\alpha\rangle=f(0), \quad f\in A_1^2(\alpha).
$$
By $b$ we denote the Green function of the group $\Gamma$ with respect to $\zeta_0$
$$
b_{\zeta_0}(\zeta)=\prod_{\gamma\in\Gamma}\frac{|\gamma(\zeta_0)|}{\gamma(\zeta_0)}\frac{\gamma(\zeta_0)-\zeta}{1-\zeta\gamma(\zeta_0)}. 
$$
Note that $b_{\zeta_0}$ is character-automorphic $b_{\zeta_0}\circ\gamma=\mu_{\zeta_0}(\gamma)b_{\zeta_0}$ and it is related to the standard Green function in the domain by
$$
\ln\frac 1{|b_{\zeta_0}(\zeta)|}=G(z(\zeta),z(\zeta_0)).
$$ 
By $b$ we denote the Green function with respect to the origin, $b=b_0$.

\begin{lemma}
DCT is equivalent to  each of the following to identities
\begin{equation}\label{eqn61}
b(\zeta)\overline{k^\alpha(\zeta)}=\zeta\frac{k^{\mu\alpha^{-1}}(\zeta)}{k^{\mu\alpha^{-1}}(0)}b'(0),
\ \zeta\in\bbT, \quad \forall \alpha\in\Gamma^*,
\end{equation}
and 
\begin{equation}\label{eqn62}
{k^\alpha(0)}{k^{\mu\alpha^{-1}}(0)}=b'(0)^2,  \quad \forall \alpha\in\Gamma^*.
\end{equation}
\end{lemma}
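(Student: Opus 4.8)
The plan is to route both equivalences through a single pairing reformulation of DCT on the fundamental set $\bbE$, and then to read off the reproducing kernel identities from it. First I would translate DCT into a bilinear identity. For $f\in A_1^2(\alpha)$ and $g\in A_1^2(\mu\alpha^{-1})$ the product $fg$ is character automorphic with character $\mu$ and weight two, so $fg/b$ has trivial character and weight two; hence $\frac{fg}{b}\,d\zeta$ descends to a $\Gamma$-invariant meromorphic one-form on $\Omega$ whose only singularity in a fundamental set is a simple pole over $\zeta=0$ (the unique zero of $b=b_0$) with residue $f(0)g(0)/b'(0)$. Writing this form as $F(z)\,dz/z$, the integrability hypothesis \eqref{eqdct2} holds automatically, since it reduces to $\int_\bbE|fg|\,dm\le\|f\|\,\|g\|<\infty$ by Cauchy--Schwarz, and folding the two-sided boundary integral onto $\bbE$ (the interior arcs of $\partial\bbE$ cancel pairwise by automorphy, and $d\zeta/2\pi i=\zeta\,dm$ on $\bbT$), DCT becomes the assertion
\begin{equation*}
\int_\bbE f(\zeta)\,g(\zeta)\,\frac{\zeta}{b(\zeta)}\,dm(\zeta)=\frac{f(0)g(0)}{b'(0)},\qquad f\in A_1^2(\alpha),\ g\in A_1^2(\mu\alpha^{-1}),\ \alpha\in\Gamma^*.
\end{equation*}
Since every admissible test form factors (densely) as such a product, this pairing identity is equivalent to DCT.

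To pass from the pairing to \eqref{eqn61} I rewrite its left-hand side as an inner product: on $\bbT$ one has $\overline{\bar\zeta\,b\,\overline{g}}=\zeta g/b$, so the identity reads $\langle f,\bar\zeta\,b\,\overline{g}\rangle=\tfrac{g(0)}{b'(0)}\langle f,k^\alpha\rangle$ for all $f\in A_1^2(\alpha)$. Taking $g=k^{\mu\alpha^{-1}}$ and using that $\bar\zeta\,b\,\overline{k^{\mu\alpha^{-1}}}$ is actually the boundary value of a function of $A_1^2(\alpha)$ — this Smirnov property being exactly what DCT contributes — I conclude $\bar\zeta\,b\,\overline{k^{\mu\alpha^{-1}}}=\tfrac{k^{\mu\alpha^{-1}}(0)}{b'(0)}\,k^\alpha$, that is $b\,\overline{k^{\mu\alpha^{-1}}}=\zeta\,\tfrac{k^{\mu\alpha^{-1}}(0)}{b'(0)}\,k^\alpha$; relabelling $\alpha\mapsto\mu\alpha^{-1}$ gives \eqref{eqn61} with the constant $k^\alpha(0)/b'(0)$ in place of $b'(0)/k^{\mu\alpha^{-1}}(0)$.

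The implication \eqref{eqn61}$\Rightarrow$\eqref{eqn62} is then immediate: taking moduli on $\bbT$, where $|b|=|\zeta|=1$, and integrating over $\bbE$ while using $\|k^\alpha\|^2=k^\alpha(0)$ turns \eqref{eqn61} into $k^\alpha(0)=\tfrac{b'(0)^2}{k^{\mu\alpha^{-1}}(0)}$, which is \eqref{eqn62}; in particular $k^\alpha(0)/b'(0)=b'(0)/k^{\mu\alpha^{-1}}(0)$, so the two normalizations of the boundary identity coincide and the version produced above is indeed \eqref{eqn61}. I would also record that \eqref{eqn61} is exactly the equality case of the Cauchy--Schwarz bound $|\langle\Psi,k^{\mu\alpha^{-1}}\rangle|^2\le\|\Psi\|^2\,k^{\mu\alpha^{-1}}(0)=k^\alpha(0)k^{\mu\alpha^{-1}}(0)$ for $\Psi:=\bar\zeta\,b\,\overline{k^\alpha}$, since equality forces $\Psi\parallel k^{\mu\alpha^{-1}}$; this is the mechanism that makes \eqref{eqn61} and the scalar identity \eqref{eqn62} interchangeable once the value of the pairing is known.

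The main obstacle is the converse — recovering DCT from either identity — and it is governed by the same analytic defect. The contour integral over $\partial\bbE$ equals the residue only up to a flux through the ideal boundary (the accumulation of $E$ at $\infty$), the Widom--Hayashi defect; DCT is precisely the vanishing of this flux, equivalently the vanishing of the anti-Smirnov component of $\bar\zeta\,b\,\overline{k^\alpha}$. I therefore expect the heart of the argument to be a sum rule valid in every Widom domain that pins the pairing $\langle\bar\zeta\,b\,\overline{k^\alpha},k^{\mu\alpha^{-1}}\rangle$ and forces, through the Cauchy--Schwarz equality above, that \eqref{eqn62} already entails $\Psi\parallel k^{\mu\alpha^{-1}}$, i.e.\ \eqref{eqn61}. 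Finally \eqref{eqn61}$\Rightarrow$DCT would follow by reversing the pairing computation and propagating the origin identity to every base point via the Blaschke factors $b_{\zeta_0}$ (which shift the character by $\mu_{\zeta_0}$), so that the pairing holds for all $f,g$ and hence on the total class of admissible test forms.
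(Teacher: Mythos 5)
First, a point of reference: the paper does not prove this lemma at all --- it is quoted as a known fact from the Hasumi/Sodin--Yuditskii theory of Hardy spaces on Widom surfaces (cf.\ \cite{SYU,YU}), so your proposal has to be judged on its own terms. On those terms the framework you set up is the right one, but both nontrivial implications have genuine gaps. Your pairing reformulation of DCT is sound: folding the invariant form $\frac{fg}{b}\frac{d\zeta}{2\pi i}$ onto $\bbE$ and using the $H^1=H^2\cdot H^2$ factorization does show that DCT is equivalent to $\int_\bbE fg\,\frac{\zeta}{b}\,dm=\frac{f(0)g(0)}{b'(0)}$ for all $f\in A_1^2(\alpha)$, $g\in A_1^2(\mu\alpha^{-1})$, $\alpha\in\Gamma^*$; and your Cauchy--Schwarz remark correctly shows that, \emph{given} this pairing, \eqref{eqn61} and \eqref{eqn62} are equivalent to each other (the implication \eqref{eqn61}$\Rightarrow$\eqref{eqn62} by taking moduli is also fine). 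The first gap is in DCT$\Rightarrow$\eqref{eqn61}. The pairing, applied with $g=k^{\mu\alpha^{-1}}$, identifies only the orthogonal projection,
\begin{equation*}
P_{A_1^2(\alpha)}\left[\bar\zeta\, b\,\overline{k^{\mu\alpha^{-1}}}\right]=\frac{k^{\mu\alpha^{-1}}(0)}{b'(0)}\,k^\alpha ,
\end{equation*}
and hence, by Bessel's inequality, only the one-sided bound $k^\alpha(0)k^{\mu\alpha^{-1}}(0)\le b'(0)^2$. The entire content of \eqref{eqn61} beyond the pairing is that the component orthogonal to $A_1^2(\alpha)$ vanishes, i.e.\ that $\bar\zeta\, b\,\overline{k^{\mu\alpha^{-1}}}$ actually \emph{belongs} to $A_1^2(\alpha)$ --- which is exactly what you assume when you write that this ``Smirnov property is exactly what DCT contributes.'' No mechanism is given, and none can be soft: for a general $g\in A_1^2(\mu\alpha^{-1})$ the function $\bar\zeta b\bar g$ is \emph{not} in $A_1^2(\alpha)$ (already in the disk, $\bar\zeta\cdot\zeta\cdot\bar g=\bar g\notin H^2$ unless $g$ is constant), so the membership for the kernel is not a routine Smirnov-class observation but is precisely the identity to be proven. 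What is missing is the ingredient you yourself flag as a ``sum rule'': the duality theorem valid in every Widom domain, identifying $\bigl(A_1^2(\alpha)\bigr)^\perp$ in $L^2(\bbE)$ with $\bar\zeta\, b\,\overline{\mathrm{clos}_{L^2}H^\infty(\mu\alpha^{-1})}$, which yields the unconditional reverse inequality $k^\alpha(0)k^{\mu\alpha^{-1}}(0)\ge b'(0)^2$ and thus forces the equality case. That duality is the substantive theorem behind the lemma, and it is nowhere established in your argument.

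The second gap is the converse. You present \eqref{eqn61}$\Rightarrow$DCT and \eqref{eqn62}$\Rightarrow$\eqref{eqn61} only as expectations (``I therefore expect\dots'', ``would follow by reversing\dots''). Reversing the pairing computation requires the value of $\langle f,\bar\zeta b\bar g\rangle$ for \emph{all} $g\in A_1^2(\mu\alpha^{-1})$, not just $g=k^{\mu\alpha^{-1}}$; passing from the single kernel identity at the origin to the full pairing needs either density of translated kernels together with analogues of \eqref{eqn61} at arbitrary base points $\zeta_0$ (these do not follow formally from the origin identity: \eqref{eqn61} is assumed for all characters, but only at the one distinguished point where $b=b_0$ and $z=0$), or again the duality theorem above. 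So, as written, your proposal rigorously establishes \eqref{eqn61}$\Rightarrow$\eqref{eqn62} and the equivalence of the two identities conditional on DCT, but it proves neither DCT$\Rightarrow$\eqref{eqn62} (the argument is circular at the key step) nor either converse implication; all of the missing implications hinge on the same Widom-domain duality that the argument never supplies.
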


Our further construction is based on the following theorem \cite{YU}. Recall that
the character $\xi_t$ was relaeted to the complex Martin function $\Theta$ in \eqref{eq23}.

\begin{theorem}\label{th52} Let $\bbC\setminus E$ be of Widom type with DCT. Let $\fB(t,z)$ corresponds to the canonical system with the resolvent function
\begin{equation}\label{eqn63}
r(z)=r(z,\{\delta_j\}_{j\ge 1})=\frac 1 2\left\{
-\frac{\sqrt{1+z}}z \sqrt{\prod_{j\ge 1}\left(\frac{1-z/a_j}{1-z/b_j}\right)^{\delta_j}}
+\frac 1 z-q_0\right\},
\end{equation}
where $\delta_j=\pm 1$ and $q_0=q_0(\{\delta_j\}_{j\ge 1})$ is defined by the condition $r(0)=0$.
For the given $\{\delta_j\}_{j\ge 1}$ there exists a unique square root $\nu$ of  $\mu$,  
$\nu^2=\mu$, such that
$$
r(z(\zeta))=\frac{b'(0)}{z'(0)}\frac{b(\zeta)k^{\nu^{-1}}(\zeta)}{k^\nu(\zeta)}
$$ 
Moreover,
\begin{equation*}
\begin{bmatrix}
\frac{b'(0)}{z'(0)}{b(\zeta)k^{\nu^{-1}}(\zeta)}\\{k^\nu(\zeta)}
\end{bmatrix}\frac{e^{-it\Theta(z(\zeta))}}{\sqrt{k^{\nu}(0)}}=\fB(t,z(\zeta))
\begin{bmatrix}
\frac{b'(0)}{z'(0)}{b(\zeta)k^{\nu^{-1}\xi_t^{-1}}(\zeta)}\\{k^{\nu\xi_t^{-1}}(\zeta)}
\end{bmatrix}\frac 1 {\sqrt{k^{\nu\xi_t^{-1}}(0)}},
\end{equation*}
where $\fB$ meets the normalization
\begin{equation}\label{eqn64}
\fB(t,0)=\begin{bmatrix}\tau& 0\\ 0&\tau^{-1}\end{bmatrix},\ 
\begin{bmatrix}0&1\end{bmatrix}\fB'(t,0)\begin{bmatrix}
1\\ 0\end{bmatrix}
=\tau-\tau^{-1},\ \ 
\tau^2=\frac{e^{-2t}k^{\nu\xi_t^{-1}}(0)}{k^{\nu}(0)}.
\end{equation}

\end{theorem}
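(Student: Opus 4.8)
The plan is to establish the two assertions in turn: the static identity $r(z(\zeta))=\frac{b'(0)}{z'(0)}\frac{b(\zeta)k^{\nu^{-1}}(\zeta)}{k^\nu(\zeta)}$, and then the dynamical intertwining relation together with the normalization \eqref{eqn64}. For the static identity I would first check that the right-hand ratio descends to a single-valued function on $\Omega=\bbC\setminus E$. Under $\gamma\in\Gamma$ the automorphy factors $\gamma_{21}\zeta+\gamma_{22}$ occurring in $k^{\nu^{-1}}|[\gamma]$ and $k^{\nu}|[\gamma]$ cancel, so $k^{\nu^{-1}}/k^\nu$ is character automorphic with character $\nu^{-2}$; multiplying by $b$, whose character is $\mu$, gives total character $\mu\nu^{-2}$, which is trivial precisely because $\nu^2=\mu$. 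This is the structural reason a square root of $\mu$ must enter. The ratio is then a meromorphic function on $\Omega$ whose only poles lie in the gaps $(a_j,b_j)$, at the (single per gap) zeros of $k^\nu$, so it carries the divisor of a reflectionless resolvent.

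Next I would show this ratio is reflectionless on $E$, i.e. has purely imaginary boundary values there, by inserting the DCT identity \eqref{eqn61} (taken at $\alpha=\nu$, where $\mu\alpha^{-1}=\nu$, and at $\alpha=\nu^{-1}$) to rewrite $\overline{k^{\nu^{-1}}}$ and $\overline{k^{\nu}}$ on $\bbT$ in terms of the same kernels; the factors $\zeta b'(0)/b$ then drop out of the quotient and leave the reflectionless relation. Since the pole divisor in the gaps is fixed by the sign pattern $\{\delta_j\}$ — which corresponds bijectively to the choice of square root $\nu$, the square roots of $\mu$ differing by the two-torsion characters $\chi$, $\chi^2=1$, indexed by $\{\delta_j=\pm1\}$ — the reflectionless function with that divisor is unique, and comparing it with the explicitly reflectionless $r$ of \eqref{eqn63} forces both the identity and the value of $\nu$. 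Identity \eqref{eqn62} supplies the normalizing constant $b'(0)/z'(0)$.

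For the intertwining relation I would write $\Phi_\alpha=[\,\frac{b'(0)}{z'(0)}b\,k^{\alpha^{-1}},\,k^\alpha\,]^{T}/\sqrt{k^\alpha(0)}$ and view the character-shifted family $\{\Phi_{\nu\xi_t^{-1}}\}_{t\ge0}$, with $\xi_t$ the Martin character of \eqref{eq23}, as the nested chain of de Branges spaces attached to the canonical system of \eqref{eqn63}. The point is that $e^{-it\Theta(z(\zeta))}\Phi_\nu(\zeta)$ and $\Phi_{\nu\xi_t^{-1}}(\zeta)$ carry the same automorphy under $\Gamma$, so their quotient is a genuine entire $2\times2$ matrix function $\fB(t,z)$ of $z$. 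Differentiating in $t$ — the derivative of $e^{-it\Theta}$ contributing the factor $zH(t)$ through $\Theta$, together with the monotone dependence of $k^{\nu\xi_t^{-1}}$ on $t$ — shows $\fB$ solves the canonical system \eqref{debr0} with $\det\fB=1$ and a nonnegative Hamiltonian, while the resolvent limit reproduces $r$; by the uniqueness in Theorem \ref{thdB} this $\fB$ is exactly the transfer matrix of \eqref{eqn63}. The normalization \eqref{eqn64} then drops out of the intertwining relation evaluated at $\zeta=0$: since $b(0)=0$ the first component vanishes, $\cM(0)=1$ gives $e^{-it\Theta(0)}=e^{t}$, and matching second components yields $\tau^2=e^{-2t}k^{\nu\xi_t^{-1}}(0)/k^\nu(0)$, whereas the $z$-linear term at the origin, carrying $b'(0)$, fixes the off-diagonal derivative $\tau-\tau^{-1}$.

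The main obstacle is this dynamical step: proving that the quotient of the two reproducing-kernel vectors is genuinely entire in $z$ with unimodular determinant, and that differentiation in $t$ produces a nonnegative Hamiltonian with the monotone parametrization of \eqref{debr0}. This is where the Widom condition \eqref{widomcond} and DCT \eqref{eqn61}--\eqref{eqn62} do the real work: they guarantee the integrability and boundary regularity needed for $k^{\nu\xi_t^{-1}}$ to depend smoothly on $t$ and for the cancellation of character-automorphic factors to leave an entire function, while Lemma \ref{l21} controls the growth of the resulting entries on $\bbR_+$ so that the candidate matrix lies in the required de Branges class. By contrast the static reflectionless representation is comparatively direct once \eqref{eqn61}--\eqref{eqn62} are in hand; the genuine difficulty is the emergence of the canonical system from the Martin-character flow $t\mapsto\xi_t$.
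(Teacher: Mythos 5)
First, a point of order: the paper does not prove Theorem \ref{th52} at all --- it is imported verbatim from \cite{YU} (``Our further construction is based on the following theorem''), so there is no internal proof to compare yours against. Judged on its own terms, your plan has the right skeleton for the static identity (the character count forcing $\nu^2=\mu$ is exactly the correct structural observation, and \eqref{eqn62} does supply the normalization), but it has a genuine gap at precisely the point you yourself flag as the ``main obstacle,'' and that point is the actual content of the theorem.

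Concretely: you define $\fB(t,z)$ as the ``quotient'' of $e^{-it\Theta}\Phi_\nu$ by $\Phi_{\nu\xi_t^{-1}}$. This is ill-posed. A $2\times2$ matrix function is never determined by its action on a single $\bbC^2$-valued function (pointwise, two equations in four unknowns), and matching character automorphy of two vector-valued functions cannot by itself produce an entire matrix function of $z$, let alone one with $\det\fB=1$ solving \eqref{debr0} with a nonnegative Hamiltonian. The existence of such a transfer matrix realizing the character flow $t\mapsto\nu\xi_t^{-1}$ is exactly the de Branges--type inverse monodromy statement being proved; your plan presupposes it and then ``differentiates in $t$'' an object that has not been constructed. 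The argument in \cite{YU} runs in the opposite direction: one builds the chain of de Branges spaces of character-automorphic functions, proves --- and this is where the Widom condition and DCT genuinely enter --- that the chain is totally ordered and continuously parametrized, obtains $\fB(t,z)$ as the transfer matrices of that chain, and only then reads off the displayed vector identity and \eqref{eqn64}. Two further slips are worth fixing even at the sketch level. First, your family $\Phi_\alpha=[\,b k^{\alpha^{-1}},\,k^\alpha\,]^T$ taken at $\alpha=\nu\xi_t^{-1}$ is not the theorem's vector: the theorem has $[\,b k^{\nu^{-1}\xi_t^{-1}},\,k^{\nu\xi_t^{-1}}\,]^T$, and your version has components with mismatched characters ($\mu\nu^{-1}\xi_t=\nu\xi_t$ against $\nu\xi_t^{-1}$), so the automorphy matching you invoke fails for your own family. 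Second, in the static step, \eqref{eqn61} at $\alpha=\nu^{-1}$ gives $\mu\alpha^{-1}=\mu\nu$, not $\nu^{-1}$, so $k^{\mu\nu}$ enters the boundary relation and nothing ``drops out''; the computation must be organized differently, and the Nevanlinna-class property of $b k^{\nu^{-1}}/k^\nu$ --- which you need before any uniqueness-of-reflectionless-functions argument can be applied --- is never established.
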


We want to pass from the canonical system related to the spectral function $r(z)$ given by \eqref{eqn63} to the spectral function $R(z)$ of the family \eqref{rth01}. The following two transformations are quite evident.
\begin{lemma}\label{l63}
If  $R\mapsto \lambda R$, $\lambda>0$, then 
$$
\begin{bmatrix}
A& B\\
C&D
\end{bmatrix}\mapsto
\begin{bmatrix}
A& \lambda B\\
\frac 1 \lambda C&D
\end{bmatrix}.
$$
If $R\to R+q$, $q\in \bbR$, then
$$
\begin{bmatrix}
A& B\\
C&D
\end{bmatrix}\mapsto
\begin{bmatrix}
1& q\\
0&1
\end{bmatrix}
\begin{bmatrix}
A& B\\
C&D
\end{bmatrix}
\begin{bmatrix}
1&-q \\
0&1
\end{bmatrix}.
$$
\end{lemma}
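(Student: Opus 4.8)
The plan is to observe that both operations on $R$ are M\"obius (linear-fractional) actions of a constant, real, $J$-unitary $2\times2$ matrix, and that the two formulas for the transfer matrix are nothing but conjugation of $\fA$ by that matrix; once this is seen, the determinant, the normalization at the origin, and the $J$-expanding inequality \eqref{jexp} are preserved automatically, and the uniqueness in Theorem \ref{thdB} identifies the conjugated object with the transfer matrix of the transformed resolvent.

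Concretely, I would introduce $T_\lambda=\begin{bmatrix}\sqrt\lambda&0\\0&1/\sqrt\lambda\end{bmatrix}$ and $T_q=\begin{bmatrix}1&q\\0&1\end{bmatrix}$, and check by a one-line multiplication that
\begin{equation*}
T_\lambda\fA T_\lambda^{-1}=\begin{bmatrix}A&\lambda B\\ C/\lambda&D\end{bmatrix},\qquad T_q\fA T_q^{-1}=\begin{bmatrix}1&q\\0&1\end{bmatrix}\fA\begin{bmatrix}1&-q\\0&1\end{bmatrix},
\end{equation*}
which are exactly the two matrices asserted in the lemma. Writing the de Branges representation \eqref{repr} in linear-fractional notation $R=\fA\!\cdot\!\cE$, with $\begin{bmatrix}a&b\\c&d\end{bmatrix}\!\cdot\!w=\tfrac{aw+b}{cw+d}$, the point is that left multiplication by $T_\lambda$ realizes $w\mapsto\lambda w$ and left multiplication by $T_q$ realizes $w\mapsto w+q$. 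Setting $\tilde\cE:=\lambda\cE$ in the first case and $\tilde\cE:=\cE+q$ in the second, which is again of Nevanlinna class since $\lambda>0$ and $q\in\bbR$, and using that the action is a group action, I get $(T\fA T^{-1})\!\cdot\!\tilde\cE=T\!\cdot\!(\fA\!\cdot\!\cE)=T\!\cdot\!R$, i.e.\ $\lambda R$ for $T=T_\lambda$ and $R+q$ for $T=T_q$. Thus the conjugated matrix represents the transformed resolvent through a Nevanlinna function.

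It then remains to confirm the structural hypotheses of Theorem \ref{th41}. Each $T$ is constant with $\det T=1$, so $\det(T\fA T^{-1})=1$, and since $\fA(0)=I$ (recall $A(0)=D(0)=1$, $B(0)=C(0)=0$) the normalization survives: $(T\fA T^{-1})(0)=TIT^{-1}=I$. For \eqref{jexp} the decisive fact is that $T_\lambda$ and $T_q$ are real and $J$-unitary, $T^*JT=J$, a two-line $2\times2$ check for each; from $T^*JT=J$ one also reads off $J=(T^{-1})^*JT^{-1}$, whence
\begin{equation*}
\frac{(T\fA T^{-1})^*J(T\fA T^{-1})-J}{z-\bar z}=(T^{-1})^*\,\frac{\fA^*J\fA-J}{z-\bar z}\,T^{-1}\ge0,
\end{equation*}
the congruence by the invertible $T^{-1}$ preserving non-negativity. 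Hence $T\fA T^{-1}$ is an entire $J$-expanding matrix of determinant one, normalized at the origin, and representing the transformed resolvent; by the uniqueness in Theorem \ref{thdB} it \emph{is} the transfer matrix of $\lambda R$ (resp.\ $R+q$), which is the assertion.

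The only genuinely mechanical steps are the two conjugation computations and the two $J$-unitarity verifications; the one place deserving care is the appeal to uniqueness, where I would spell out that the conjugated data meet every hypothesis singling out the transfer matrix (entirety, $\det=1$, the inequality \eqref{jexp}, the value $I$ at the origin, and admissibility of $\tilde\cE$), so that Theorem \ref{thdB} applies verbatim. As a cross-check at the level of the canonical system itself, the same conjugation shows that $T\fA T^{-1}$ solves \eqref{debr0} with Hamiltonian $\tilde H=THT^*$, which is again non-negative because $T$ is invertible; this furnishes an independent, ODE-based confirmation that the conjugated matrix is a genuine transfer matrix.
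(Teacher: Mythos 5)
Your proposal is correct, and it is worth noting that the paper offers no proof at all for this lemma: the author introduces it with the words ``The following two transformations are quite evident'' and states it without argument. Your conjugation argument --- writing both operations as the M\"obius action of the constant, real, $J$-unitary matrices $T_\lambda$ and $T_q$, checking that $T\fA T^{-1}$ reproduces the two displayed matrices, that $\det=1$, the normalization $\fA(0)=I$, and the inequality \eqref{jexp} survive the congruence, and then invoking the uniqueness in Theorem \ref{thdB} --- is precisely the natural formalization of what the author treats as evident, and all of your computations check out (in particular $T_\lambda\fA T_\lambda^{-1}$ does equal $\bigl[\begin{smallmatrix}A&\lambda B\\ C/\lambda&D\end{smallmatrix}\bigr]$, and $T^*JT=J$ holds for both matrices). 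Your closing cross-check that $T\fA T^{-1}$ solves \eqref{debr0} with the non-negative Hamiltonian $\tilde H=THT^*$ is also correct (it follows from $J^{-1}T^{-1}J=T^*$ for real $J$-unitary $T$) and is arguably the cleanest way to make the identification with the transfer matrix of the transformed resolvent immediate, since it exhibits the transformed canonical system explicitly rather than routing everything through the uniqueness statement.
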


A little bit more involved transformation is given in the following lemma.

\begin{lemma}\label{l64}
Let $\fB(z)$, $\fB(0)=I$, and $\fA(z)$, $\fA(0)=I$, correspond to the systems with the resolvent functions $R(z)$ and 
$R(z)-\frac 1 z$ correspondently. If
$$
\fB(z)\begin{bmatrix} 1\\ 0\end{bmatrix}=\begin{bmatrix} A(z)\\ C(z)\end{bmatrix}
$$
then
$$
\fA(z)\begin{bmatrix} 1\\ 0\end{bmatrix}=\begin{bmatrix} A(z)-\frac 1 z C(z)\\ C(z)\end{bmatrix}
\frac 1{1-C'(0)}.
$$
\end{lemma}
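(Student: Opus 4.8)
The plan is to realize $R-\frac1z$ as a linear-fractional image of $R$ and then to identify the resulting object with the canonical transfer matrix $\fA$ through the uniqueness in Theorem~\ref{thdB}. I would start from the elementary identity
$$R(z)-\frac1z=\begin{bmatrix}1&-\frac1z\\0&1\end{bmatrix}\cdot R(z),$$
where the dot denotes the action by linear-fractional transformation and the matrix has determinant $1$. Feeding in the de Branges representation $R=\frac{A\cE+B}{C\cE+D}=\fB\cdot\cE$ of Theorem~\ref{thdB} gives
$$R-\frac1z=\left(\begin{bmatrix}1&-\frac1z\\0&1\end{bmatrix}\fB\right)\cdot\cE,$$
with the \emph{same} Nevanlinna parameter $\cE$. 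The first column of $\begin{bmatrix}1&-\frac1z\\0&1\end{bmatrix}\fB$ is $\begin{bmatrix}A-\frac1zC\\ C\end{bmatrix}$, which is entire (since $\fB(0)=I$ gives $C(0)=0$, so $C/z$ is regular at the origin) and takes the value $\begin{bmatrix}1-C'(0)\\0\end{bmatrix}$ at $z=0$.

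The matrix $\begin{bmatrix}1&-\frac1z\\0&1\end{bmatrix}\fB$ is neither entire (its $(1,2)$ entry has a pole at $0$) nor normalized to $I$ at the origin, so it is not yet $\fA$. The structural point I would use is that subtracting $\frac1z$ is a modification of the spectral data at the left endpoint $t=0$ (it inserts a point mass at the origin) and therefore leaves the tail Weyl function essentially unchanged; consequently $\fA$ should differ from $\begin{bmatrix}1&-\frac1z\\0&1\end{bmatrix}\fB$ by an upper-triangular right factor $G=\begin{bmatrix}\gamma&*\\0&\gamma^{-1}\end{bmatrix}$, chosen so that $\fA=\begin{bmatrix}1&-\frac1z\\0&1\end{bmatrix}\fB\,G$ is entire, $J$-expanding and satisfies $\fA(0)=I$, the parameter becoming $\hat\cE=G^{-1}\cdot\cE$. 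Equivalently, the cut-off resolvent transforms by the same map, $\hat A/\hat C=A/C-\frac1z$. Because $G$ is upper triangular, the first column is merely rescaled,
$$\fA\begin{bmatrix}1\\0\end{bmatrix}=\begin{bmatrix}1&-\frac1z\\0&1\end{bmatrix}\fB\begin{bmatrix}\gamma\\0\end{bmatrix}=\gamma\begin{bmatrix}A-\frac1zC\\ C\end{bmatrix},$$
and by the uniqueness in Theorem~\ref{thdB} this $\fA$ is the canonical transfer matrix of the $R-\frac1z$ system.

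It then remains to show that $\gamma$ is constant and to evaluate it. Writing $\gamma=\hat C/C$, where $\hat C$ is the lower entry of $\fA\begin{bmatrix}1\\0\end{bmatrix}$, I would check that $\hat C$ and $C$ share their zeros, so $\gamma$ is entire; it is of bounded characteristic in $\bbC\setminus\bbR_-$, and since the two transfer matrices have matching exponential type, $\gamma$ is bounded on $\bbR_+$. Lemma~\ref{l21} then forces $\gamma$ to be constant. Finally, the normalization $\fA(0)=I$ requires the top entry $\gamma\,(A-\frac1zC)$ to equal $1$ at the origin; since $A(0)-C'(0)=1-C'(0)$, this yields $\gamma=\frac1{1-C'(0)}$, which is exactly the claimed formula.

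The hard part will be the structural step asserting that the connecting factor $G$ is upper triangular, i.e. that subtracting $\frac1z$ acts as a pure left-endpoint perturbation that preserves the tail and only rescales the first column. Making this rigorous amounts to verifying that the candidate $\fA$ above is genuinely $J$-expanding (so Theorem~\ref{thdB} applies and pins it down) and to confirming the growth hypotheses needed for Lemma~\ref{l21}; the linear-fractional bookkeeping and the evaluation of the constant are then routine.
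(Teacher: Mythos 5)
Your decomposition is, in substance, exactly the paper's. The paper posits
$\fA(z)=\begin{bmatrix}1&-\frac1z\\0&1\end{bmatrix}\fB(z)\begin{bmatrix}1&\frac{\rho}{z}\\0&1\end{bmatrix}U$,
with $\rho$ fixed by the no-pole condition \eqref{eq61} and $U$ a constant $J$-unitary matrix enforcing $\fA(0)=I$; a direct computation of $\tilde\fA(0)$ shows it is upper triangular, so the paper's right factor $\begin{bmatrix}1&\frac{\rho}{z}\\0&1\end{bmatrix}U$ is precisely your upper-triangular $G$, and both arguments finish by reading off the first column and normalizing at the origin. The structural assertion you flag as the hard part is assumed in the paper in exactly the same way (``First we note that $\fA(z)$ is of the form\dots''), so you are not behind the paper there. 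Where you genuinely diverge is in identifying the diagonal of $G$: the paper \emph{solves} for $G$ explicitly --- \eqref{eq61} gives $\rho=\frac1{1-C'(0)}$, and the $J$-unitarity of $\tilde\fA(0)$ gives $\tilde\fA^{-1}(0)\begin{bmatrix}1\\0\end{bmatrix}=\frac1{1-C'(0)}\begin{bmatrix}1\\0\end{bmatrix}$ --- so constancy of the scaling is automatic, the whole proof is a finite computation, and Lemma \ref{l21} is never invoked.

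Your substitute endgame has two soft spots. First, the claim that $\hat C$ and $C$ ``share their zeros'' (writing $\fA=\begin{bmatrix}\hat A&\hat B\\ \hat C&\hat D\end{bmatrix}$) is asserted, not proved; it is also unnecessary: once $G=\fB^{-1}\begin{bmatrix}1&\frac1z\\0&1\end{bmatrix}\fA$ is granted to be upper triangular, its diagonal entries $\gamma=D\hat A-B\hat C+\frac{D\hat C}{z}$ and $\gamma^{-1}=A\hat D-C\hat B-\frac{C\hat D}{z}$ are entire because $C(0)=\hat C(0)=0$, and since $\det G=1$ they are reciprocal entire functions, so $\gamma$ is zero-free. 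Second, and more seriously, the inference ``the two transfer matrices have matching exponential type, hence $\gamma=\hat C/C$ is bounded on $\bbR_+$'' is a non sequitur: equality of types gives \emph{upper} bounds for $\log|\hat C(x)|$ and $\log|C(x)|$ against $\cM(x)$, but no lower bound for $|C(x)|$ on $\bbR_+$, so nothing controls the ratio, and Lemma \ref{l21} cannot be applied as you propose. A correct soft finish is available: $\gamma$ is zero-free, entire, and of bounded characteristic in $\bbC\setminus\bbR_-$ (granted the bounded-characteristic properties of the entries used throughout the paper, e.g.\ in Lemma \ref{l45}), hence of order at most $1/2$ by the Levin theorem quoted in the proof of Lemma \ref{l21}; a zero-free entire function of order less than one is constant by Hadamard factorization, and the normalization at the origin then gives $\gamma=\frac1{1-C'(0)}$ as you computed. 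With that replacement --- or simply with the paper's explicit determination of $G$ --- your argument closes.
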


\begin{proof}
First we note that $\fA(z)$ is of the form
$$
\fA(z)=\begin{bmatrix} 1 &-\frac 1 z\\
0&1
\end{bmatrix}\fB(z)\begin{bmatrix} 1 &\frac \rho z\\
0&1
\end{bmatrix} U
$$
where $\rho$ is fixed by the condition that the resulting matrix function $\fA(z)$ does not have pole in the origin, that is,
\begin{equation}\label{eq61}
-\begin{bmatrix} 0 &1\\
0&0
\end{bmatrix}+
\rho\begin{bmatrix} 0 &1\\
0&0
\end{bmatrix}-
\rho\begin{bmatrix} 0 &1\\
0&0
\end{bmatrix}\fB'(0)\begin{bmatrix} 0 &1\\
0&0
\end{bmatrix}=0;
\end{equation}
and $U$ is a $J$-unitary constant matrix, which is defined by the condition $\fA(0)=I$.

\eqref{eq61} is equivalent to
$$
-1+\rho(1-C'(0))=0.
$$
For the given $\rho$, $U=\tilde \fA^{-1}(0)$, where
$$
\tilde\fA(z)=\begin{bmatrix} 1 &-\frac 1 z\\
0&1
\end{bmatrix}\fB(z)\begin{bmatrix} 1 &\frac \rho z\\
0&1
\end{bmatrix}.
$$
We have
$$
\tilde\fA(0)=I-\begin{bmatrix} 0 &1\\
0&0
\end{bmatrix}\fB'(0)+\rho\fB'(0)
\begin{bmatrix} 0 &1\\
0&0
\end{bmatrix}-\frac\rho 2\begin{bmatrix} 0 &1\\
0&0
\end{bmatrix}\fB''(0)\begin{bmatrix} 0 &1\\
0&0
\end{bmatrix}.
$$
This matrix is $J$-unitary, therefore
\begin{equation*}
\begin{split}
\fA^{-1}(0)=&J\fA(0)^*J^*\\
=&
I-J\fB'(0)^*\begin{bmatrix} 0 &0\\
0&1
\end{bmatrix}-\rho
\begin{bmatrix} 1 &0\\
0&0
\end{bmatrix}
\fB'(0)^*J^*
+\frac\rho 2\begin{bmatrix} 1 &0\\
0&0
\end{bmatrix}\fB''(0)^*\begin{bmatrix} 0 &0\\
0&1
\end{bmatrix}.
\end{split}
\end{equation*}
In particular
$$
\fA^{-1}(0)\begin{bmatrix} 1 \\0
\end{bmatrix}=
\begin{bmatrix} 1 \\0
\end{bmatrix}+\rho\begin{bmatrix} 1 \\0
\end{bmatrix}C'(0)=\frac 1{1-C'(0)}\begin{bmatrix} 1 \\0
\end{bmatrix}.
$$

Thus
\begin{equation*}
\begin{split}
\fA(z)\begin{bmatrix} 1 \\0
\end{bmatrix}=&\tilde \fA(z)\tilde\fA^{-1}(0)\begin{bmatrix} 1 \\0
\end{bmatrix}=\begin{bmatrix} 1 &-\frac 1 z\\
0&1
\end{bmatrix}\frac{\fB(z)}{1-C'(0)}\begin{bmatrix} 1 \\0
\end{bmatrix}\\=&
\begin{bmatrix} A(z)-\frac 1 z C(z)\\ C(z)\end{bmatrix}
\frac 1{1-C'(0)}.
\end{split}
\end{equation*}

\end{proof}

We are in a position to  prove one  of our main results.
\begin{theorem}\label{th55}
Let DCT hold in the Widom domain $\bbC\setminus E$. Then the infimum $M(2\l)$ in \eqref{1} is given by the expression
\begin{equation}\label{eqfinal}
M(2\l)=\inf_{\nu:\nu^2=\mu}2\ln\frac{k^{\nu}(0)+e^{-2\l}k^{\nu\xi_\l^{-1}}(0)}{k^{\nu}(0)-e^{-2\l}k^{\nu\xi_\l^{-1}}(0)}.
\end{equation}

\end{theorem}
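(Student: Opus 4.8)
The plan is to reduce the reproducing-kernel formula \eqref{eqfinal} to the general representation \eqref{report0}, $M(2\l)=\inf_{R\in\cR}\ln\frac1{C_R'(t_0,0)^2}$, by computing $C_R'(t_0,0)$ for each admissible $R$ through the canonical system furnished by Theorem \ref{th52}. First I would record that the family $\cR$ in \eqref{rth01} is parametrized by the sign sequences $\{\delta_j\}$, and that Theorem \ref{th52} attaches to each such sequence a unique square root $\nu$ of $\mu$; since the square roots of $\mu$ form a coset of the two-torsion characters, this correspondence is a bijection, so the infimum over $R\in\cR$ in \eqref{report0} is exactly the infimum over $\nu$ with $\nu^2=\mu$ appearing in \eqref{eqfinal}. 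It then remains to evaluate $C_R'(t_0,0)$ for one fixed $R$, equivalently one fixed $\nu$.

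The first genuine point is the value of the cut-off time $t_0$ in \eqref{tt0} for the problem with parameter $2\l$. Because $e^{-it\Theta(z(\zeta))}$ has Martin growth exactly $t$ (indeed $|e^{-it\Theta}|=e^{t\cM}$ by \eqref{conjmart}), the intertwining relation of Theorem \ref{th52} shows that both entries of the first column of the transfer matrix $\fB(t,\cdot)$ carry Martin type $t$; the passage of Lemmas \ref{l63} and \ref{l64} from the canonical system of $r$ in \eqref{eqn63} to the one of $R$ in \eqref{rth01} only inserts rational factors in $z$ and hence preserves this type. Consequently $(A_RC_R)(t,\cdot)$ has Martin type $2t$, so the condition $\lim_{x\to\infty}\ln|(A_RC_R)(t,x)|/\cM(x)\le 2\l$ in \eqref{tt0} reads $2t\le 2\l$, giving $t_0=\l$. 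This is precisely the source of the shift between $M(2\l)$ and $\xi_\l$ in the statement.

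The computational core is to push the normalization \eqref{eqn64} at time $t_0=\l$ through the chain $r\mapsto 2r\mapsto 2r+q_0\mapsto 2r+q_0-\tfrac1z=R$ dictated by $R=2r-\tfrac1z+q_0$. Since Lemma \ref{l64} requires a transfer matrix equal to the identity at $z=0$, I would first right-multiply $\fB(\l,\cdot)$ by the $J$-unitary constant $\mathrm{diag}(\tau^{-1},\tau)$; this leaves the resolvent function $r$ and the type untouched, produces $\fB(\l,0)=I$, and, using $C_\fB'(\l,0)=\tau-\tau^{-1}$, makes the new $(2,1)$-derivative equal to $1-\tau^{-2}$. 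Scaling by $2$ (Lemma \ref{l63}) halves it to $\tfrac12(1-\tau^{-2})$, the shift by $q_0$ (Lemma \ref{l63}) leaves the bottom entry of the first column untouched, and Lemma \ref{l64} finally sends a derivative $C'(0)$ to $C_R'(\l,0)=\frac{C'(0)}{1-C'(0)}$. Substituting $C'(0)=\tfrac12(1-\tau^{-2})$ would yield $C_R'(\l,0)=\frac{\tau^2-1}{\tau^2+1}$, and inserting $\tau^2=e^{-2\l}k^{\nu\xi_\l^{-1}}(0)/k^\nu(0)$ from \eqref{eqn64} gives $|C_R'(\l,0)|=\frac{k^\nu(0)-e^{-2\l}k^{\nu\xi_\l^{-1}}(0)}{k^\nu(0)+e^{-2\l}k^{\nu\xi_\l^{-1}}(0)}$. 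Then $\ln\frac1{C_R'(\l,0)^2}$ equals the logarithm in \eqref{eqfinal}, and taking the infimum over $\nu$ with $\nu^2=\mu$ completes the argument.

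I expect the main obstacle to be the bookkeeping around the normalizations: the transfer matrix supplied by Theorem \ref{th52} is normalized by \eqref{eqn64} rather than by $\fB(\l,0)=I$, so the renormalization by $\mathrm{diag}(\tau^{-1},\tau)$ must be inserted before Lemma \ref{l64} becomes applicable, and one must verify carefully that this right-multiplication by a constant $J$-unitary matrix changes neither the resolvent function $r=\lim A/C=\lim B/D$ nor the Martin type. A secondary subtlety worth checking is the sign of $C_R'(\l,0)$: it must be negative, consistent with $C'(0)=-e^{-M(2\l)/2}$ from \eqref{bd00}, which forces the inequality $k^\nu(0)>e^{-2\l}k^{\nu\xi_\l^{-1}}(0)$ and so fixes the branch of the logarithm in \eqref{eqfinal}.
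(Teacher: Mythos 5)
Your proposal is correct and follows essentially the same route as the paper's own proof: reduce to $M(2\l)=\inf_R\ln\frac{1}{C_R'(t_0,0)^2}$ with $t_0=\l$, pass from the normalized matrix $\fB(\l,z)\fB^{-1}(0)$ (your $\mathrm{diag}(\tau^{-1},\tau)$ is exactly $\fB^{-1}(0)$) through Lemma \ref{l63} (scaling by $2$ and shift by $q_0$) and Lemma \ref{l64}, arriving at $C_R'(\l,0)=\frac{\tau^2-1}{\tau^2+1}$ and then at \eqref{eqfinal} via \eqref{eqn64}. Your added justifications — the Martin-type count giving $t_0=\l$ and the sign check fixing the branch of the logarithm — are points the paper leaves implicit, but they do not change the argument.
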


\begin{proof}
We start with $\fB(z)$ related to the spectral function \eqref{eqn63}. Let
$$
\fA_1(z)=\begin{bmatrix} A_1&B_1\\ C_1& D_1\end{bmatrix}(z)=
\fB(z)\fB^{-1}(0)
$$
By \eqref{eqn64}, $C_1'(0)=1-\tau^{-2}$. We apply Lemma \ref{l63} and multiply this function by 2 and shift by $q_0$. As the result we get  $\fA_2(z)$ such that $C'_2(0)=(1-\tau^{-2})/2$. Then we apply Lemma \ref{l64} to get $\fA$ related to the spectral function $R(z)$ of the family \eqref{1} such that
$$
C'(0)=C_R'(0)=\frac{C'_2(0)}{1-C_2'(0)}=\frac{1-\tau^{-2}}{1+\tau^{-2}}=
-\frac{1-\tau^{2}}{1+\tau^{2}}.
$$
Making of  use  \eqref{eqn64} we obtain
$$
C_R'(0)=C'_\nu(t,0)=-\frac{k^{\nu}(0)-e^{-2t}k^{\nu\xi_t^{-1}}(0)}{k^{\nu}(0)+e^{-2t}k^{\nu\xi_t^{-1}}(0)}.
$$
By Theorem \eqref{th41} the extremal function $F(z)$ of the Problem \ref{pr1} is of the form \eqref{sol0} with a certain $\{\delta_j\}_{j\ge1}$ and $t_0$. Therefore the supremum over all $\nu$ for the fixed exponential type $t_0=\l$ gives 
$$
\sup_{R\in\cR}(-C_R'(\l,0))=\lambda_0=e^{-\frac 1 2 M(2\l)},
$$
or, equivalently, \eqref{eqfinal}.
\end{proof}

\begin{remark}
Note the highly important  property $k^\nu(0)=k^{\nu_0}(0)$ for a fixed $\nu_0^2=\mu$. Therefore we can define the function
\begin{equation}\label{pidef}
\Pi(\alpha)=\inf_{j^2=1_{\Gamma^*}} \frac{k^{\nu_0 j\alpha}(0)}{k^{\nu_0}(0)},
\end{equation}
which is evidently of periods $j$'s, $j^2=1_{\Gamma^*}$. In these notations
\begin{equation}\label{pimain}
M(2\l)= 2\ln\frac{1+e^{-2\l}\Pi(\xi_\l^{-1})}{1-e^{-2\l}\Pi(\xi_\l^{-1})}.
\end{equation}

\end{remark}

\begin{remark}
In  the simplest case $E=(-\infty,-1]$.
Since the group $\Gamma^*$ is trivial $k^\alpha$, in fact, does not depend on $\alpha$ and we get from \eqref{eqfinal}
$$
M(2\l)=2\ln \coth \l.
$$
Note that $M(2\l)$ is the error of approximation of $1/x$ on the half-axis $(-\infty,-1]$ by entire functions $G$ of exponential type $2\l$ of the order $1/2$ in $L^1$ norm, i.e.,
$$
M(2\l)=\inf_{G\in \cB_{2l}}\int^{-1}_{-\infty}\left |\frac 1 x -G(x)
\right| dx,
$$
and the extremal function is of the form
$$
G_{2\l}(z)=\frac 1 z\left(1-\frac{\sinh2\sqrt{z+1}\l}{\sqrt{z+1}2\l}\right).
$$
We did not find a solution in the literature, but of course it can be obtain as a limit from the polynomial case \cite[Appendix, 45]{AKH61}
$$
M_n=\inf_{P\in\cP_{n-1}}
\int^{1}_{-1}\left |\frac 1 {x-u} -P(x)
\right| dx=2\ln\frac{1+v^n}{1-v^n}, \ u=\frac {v+v^{-1}}2, \ v\in(0,1).
$$
\end{remark}

\section{If DCT fails}
\begin{theorem}\label{th61}
Let $E$ be a system of intervals \eqref{eq101.5} such that $\Omega=\bbC\setminus E$ is of Widom type. DCT fails in $\Omega$ if and only if $M(0)<\infty$. Moreover, there exists a single valued function $I(z(\zeta))=\prod b_{\zeta_k}(\zeta)$, $z(\zeta_k)\in (a_k,b_k)$ such that
\begin{equation}\label{eqno61}
M(0)=2\ln\frac{1+I(0)}{1-I(0)}
\end{equation}
and an extremal function $F\in\cB_0(E)$ is of the form
\begin{equation}\label{eqno62}
F(z)=\frac 1{\sqrt{z+1}}\prod_{j\ge 1}\frac{1-z/z(\zeta_j)}{\sqrt{(1-z/a_j)(1-z/b_j)}}\frac{I^{-1}(z)-I(z)}{I^{-1}(0)-I(0)}.
\end{equation}

\end{theorem}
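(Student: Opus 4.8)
The plan is to prove the three assertions of Theorem~\ref{th61} simultaneously by pinning down the extremal function explicitly; the inner function $I$ is the object that both detects the failure of DCT and carries the value of $M(0)$. First I would translate the hypothesis into the statement that the Blaschke product $I(z(\zeta))=\prod_k b_{\zeta_k}(\zeta)$ over the critical points $z(\zeta_k)\in(a_k,b_k)$ of $G(\cdot,0)$ (the points where $\nabla G=0$ entering \eqref{widomcond}) descends to a \emph{single-valued}, nonconstant inner function on $\Omega$. The Widom condition \eqref{widomcond} guarantees that the product converges and that $I(0)=\prod_k|b_{\zeta_k}(0)|=e^{-\sum_k G(c_k,0)}\in(0,1)$; the arithmetic content of DCT failure is exactly that the total character $\prod_k\mu_{\zeta_k}$ is trivial, so that $I$ is genuinely single-valued. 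I would obtain this equivalence from the reproducing-kernel criterion \eqref{eqn62}: DCT amounts to continuity (``multiplicativity'') of $\alpha\mapsto k^\alpha(0)$ along the Martin flow $\xi_\l$, and its failure is a jump whose size is $I(0)$, in the sense that $e^{-2\l}k^{\nu\xi_\l^{-1}}(0)/k^\nu(0)\to I(0)$ as $\l\to0^+$ rather than to $1$.

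Assuming $I$ single-valued, I would then verify that $F$ given by \eqref{eqno62} lies in $\cB_E(0)$ with $F(0)=1$. The factor $P(z)=\frac{1}{\sqrt{z+1}}\prod_j\frac{1-z/z(\zeta_j)}{\sqrt{(1-z/a_j)(1-z/b_j)}}$ has square-root branch points at $-1$ and at every endpoint $a_j,b_j$, and the zeros of its numerator sit at the critical points $z(\zeta_j)$. The crucial point is that, on the two-sheeted surface over $\Omega$ where the sheet-exchange sends $I\mapsto I^{-1}$, the factor $I^{-1}(z)-I(z)$ is odd under the involution; its branch behavior cancels the square roots of $P$, while its poles (from the zeros of $I$ at $\zeta_k$) are cancelled by the numerator $\prod_j(1-z/z(\zeta_j))$. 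Hence $F$ is single-valued and entire, $F(0)=1$ is immediate, and since every factor is of bounded characteristic and $I$ is inner one gets $\log|F(x)|/\cM(x)\to0$, i.e. condition (ii) with $\l=0$. Thus $F\in\cB_E(0)$ and $M(0)\le\int_E|F|\,dx/|x|<\infty$, giving the implication ``DCT fails $\Rightarrow M(0)<\infty$''.

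To evaluate the norm I would use $M(0)=w'(0)$ for $w$ in \eqref{7} together with the functional equation \eqref{9}, and the boundary identities on $E$, where $|I|=1$ forces $I^{-1}-I=-2i\,\Im I$ and $I^{-1}(0)-I(0)=(1-I(0)^2)/I(0)$. Reading the sign pattern $\sgn(xF(x))$ off \eqref{eqno62} (the real zeros on $E$ occur where $I=\pm1$) determines $S(z)$, and a contour computation over $E$ of the cut function built from $P$ and $I$ (whose jump across $E$ reproduces $|F(x)|\,dx/|x|$, exploiting $|I|=1$ and the single-valuedness of $I$) produces the logarithm, equivalently identifies $\lambda_0=-C'(0)=(1-I(0))/(1+I(0))$ in \eqref{bd00}. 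Since $\lambda_0=e^{-M(0)/2}$ this gives $M(0)=2\ln\frac{1+I(0)}{1-I(0)}$, i.e. \eqref{eqno61}. Extremality is then forced by the variational relations of Section~3: any extremal has simple zeros on $E$ and satisfies \eqref{6}, so together with \eqref{9} and the Nevanlinna-class constraints of Section~4 the extremal is pinned down uniquely from the data $\{z(\zeta_j)\}$, and \eqref{eqno62} realizes exactly these conditions. For the converse, when DCT holds the critical-point product is either trivial or carries a nontrivial character, so no single-valued nonconstant inner $I$ exists; the limit above gives $e^{-2\l}k^{\nu\xi_\l^{-1}}(0)/k^\nu(0)\to1$, the denominator in \eqref{eqfinal} collapses, and $M(0)=\infty$ (consistently, without such an $I$ no admissible $F$ decays fast enough to be integrable against $dx/|x|$ on the unbounded set $E$; already $F\equiv1$ has infinite norm).

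The hard part is twofold and intertwined. First, the exact equivalence ``DCT fails $\iff$ $I$ single-valued with $I(0)<1$'' requires turning the reproducing-kernel identity \eqref{eqn62} into a statement about the total character $\prod_k\mu_{\zeta_k}$ and controlling the discontinuity of $k^\alpha(0)$ along $\xi_\l\to1_{\Gamma^*}$; this is where DCT genuinely enters and where the value $I(0)$ is identified. Second, proving that \eqref{eqno62} is entire---that $I^{-1}-I$ cancels every branch point of $P$ on the double cover---needs the precise sheet-exchange behavior of $I$. The norm computation, while conceptually a residue calculation yielding $-C'(0)=(1-I(0))/(1+I(0))$, likewise demands careful bookkeeping of the two-sided boundary values of $I$ and of the sign of $F$ on each band of $E$.
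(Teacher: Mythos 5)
There is a genuine gap, and it sits exactly at the step you yourself flag as ``the hard part.'' You take the divisor $\{z(\zeta_k)\}$ to be the \emph{critical points} of $G(\cdot,0)$ (the points in \eqref{widomcond}) and you base the whole forward direction on the claim that DCT failure is equivalent to the triviality of the total character $\prod_k\mu_{\zeta_k}$ of that particular product, quantified by a jump $e^{-2\l}k^{\nu\xi_\l^{-1}}(0)/k^\nu(0)\to I(0)$ of the reproducing kernels. Neither claim is proved in your sketch, and the identification of the divisor is not the one the theorem is about. In the paper the points $z_k$ are \emph{produced by the extremal problem}: once $M(0)<\infty$, an extremal $F=-AC/(\lambda_0 z)$ exists (Section 2 compactness), the Nevanlinna function \eqref{W}, being real on $\bbR\setminus E$ and purely imaginary on $E$, has the product representation \eqref{eqno63}, and the $z_k$ are its zeros, i.e.\ the zeros of $A-CR$ in the gaps --- there is no reason these coincide with the critical points of $G(\cdot,0)$. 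With this construction, single-valuedness of $I$ costs nothing: $\Phi=A+CR$ is zero-free and, since $A,C\in\cB_E(0)$, has no singular factor, hence is outer (proof of Lemma \ref{le42}); therefore $I=(A-CR)/\Phi$ is the inner factor of a function that is single-valued in $\Omega$, hence single-valued itself. Your plan needs the converse logic --- from DCT failure to the existence of a single-valued $I$ at prescribed points --- and supplies no proof of it; nothing in \eqref{eqn61}--\eqref{eqn62} is shown to connect the critical divisor to DCT, and the claimed value of the kernel jump is an assertion.

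Two further points of comparison. First, because the paper derives \eqref{eqno62} \emph{from} the extremal function, extremality is automatic; in your reversed route you must prove that your candidate $F$ is extremal, and your appeal to the variational relations of Section 3 does not do this --- \eqref{6} and \eqref{9} are necessary conditions, and neither you nor the paper establishes that they determine the minimizer uniquely, so ``pinned down uniquely from the data $\{z(\zeta_j)\}$'' is unsupported. Second, you never reproduce the short argument that carries the implication $M(0)<\infty\Rightarrow$ DCT fails, which is where DCT actually enters the proof: for admissible $F$ with \eqref{eqdct2} finite, $F$ is of Smirnov class (the only possible unbounded inner factor would be $e^{-i\l\Theta}$, excluded at $\l=0$), and since $F$ is entire its boundary values from above and below $E$ agree, so the integral in \eqref{eqdct1} vanishes while DCT would force it to equal $F(0)=1$. (Your use of \eqref{eqfinal} at $\l=0$ for the implication DCT $\Rightarrow M(0)=\infty$ is fine and matches the paper's first remark.) Finally, the norm identity \eqref{eqno61} requires no contour computation: writing $A=\Phi(1+I)/2$, $CR=\Phi(1-I)/2$ gives $AC=\tfrac14(I^{-1}-I)\,I\Phi^2/R$, which together with \eqref{eqno63} is \eqref{eqno65}, and evaluating at $z=0$ with $F(0)=1$ yields $4\lambda_0=(I^{-1}(0)-I(0))(1-\lambda_0^2)$, i.e.\ \eqref{eqno61}. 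So while your outline contains the right objects ($I$ inner, the branch-cancellation mechanism in \eqref{eqno62}), the load-bearing equivalence is asserted for the wrong divisor and without proof, and the extremality step is missing.
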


\begin{proof}
A complete solution of the extremal problem in the presence of DCT is given in the previous section. In particular $M(0)=\infty$ in this case.

Assume that $M(0)<\infty$ and let $F\in B_0$, $F(0)=1$, be such that the integral \eqref{eqdct2} is finite. It means that $F$ is of Smirnov class,  since the only possible unbounded  inner part of this function is of the form $e^{-i\l\Theta}$. Thus, due to DCT the integral on the boundary of the domain $\Omega$ in \eqref{eqdct1} is 1. On the other hand
$F(x+i0)=F(x-i0)$, that is the integral vanishes.

Let, now, $R\in \cR$ correspond to an extremal configuration of ends of the intervals and 
$F=-AC/(\lambda_0z)$ be the extremal function, $\lambda_0=e^{-\frac 1 2 M(0)}$. Consider the Nevanlinna class function given by \eqref{W}. Taking into account that this function is real in $\bbR\setminus E$ and takes imaginary values in $E$, we have the following representation
\begin{equation}\label{eqno63}
\frac{C^2(z) R^2(z)-A^2(z)}{R(z)}=\frac{(1-\lambda_0^2)z}{\sqrt{1+z}}\prod_{j\ge1}
\frac{1-z/z_j}{\sqrt{(1-z/a_j)(1-z/b_j)}}
\end{equation}
with certain $z_k\in[a_k,b_k]$.  Define
\begin{equation}\label{eqno64}
\begin{split}
\Phi=&A+CR\\
I\Phi =&A-CR
\end{split}
\end{equation}
As it was discussed, see the proof of Lemma \ref{le42}, the first function has no zeros. Since $A$ and $C$ belong to $\cB_0$, it does not have the singular inner factor related to infinity. That is $\Phi$ is an outer function. Thus $I$ is the inner factor of $A-CR$, what is the same, of $A^2-C^2R^2$. Due to \eqref{eqno63} $I(z)=\prod b_{\zeta_k}$, where $z_k=z(\zeta_k)$. Let us point out that $I$ is single valued in $\Omega$,  as well as $\Phi$.

In these notations we have
\begin{equation}\label{eqno65}
\begin{split}
A(z)C(z)=&\frac 1 4 (I^{-1}(z)-I(z))\frac{I(z)\Phi^2(z)}{R(z)}\\
=&\frac 1 4 (I^{-1}(z)-I(z))\frac{-(1-\lambda_0^2)z}{\sqrt{1+z}}\prod_{j\ge1}
\frac{1-z/z_j}{\sqrt{(1-z/a_j)(1-z/b_j)}}.
\end{split}
\end{equation}
Due to the normalization in the origin we have
$$
4\lambda_0=(I^{-1}(0)-I(0))(1-\lambda_0^2).
$$
It proves \eqref{eqno61} and, together with \eqref{eqno65}, \eqref{eqno62}.
\end{proof}

We see that DCT plays another important role as a characteristic property of the canonical systems with reflectionless spectral functions.

Recall that a Nevanlinna class function $\cE(z)$  is a reflectionless spectral function on $E$ if there exists  a Nevanlinna class function $\cE_-(z)$ such that $\overline{\cE(x)}=-\cE_-(x)$ for $x\in E$ and $-(\cE(z)+\cE_-(z))^{-1}$  is holomorphic in $\Omega$. We assume that each function of the class is normalized by the condition 
$$
\cE_-(z)=-\frac 1 z+\dots
$$
in the origin.

\begin{theorem}\label{th62}
For a reflectionless function $\cE(z)$ let $\fA(t,z)$  be related to the corresponding canonical system. $t$ is in one-to-one correspondence with the exponential type $\l$, given by
$$
\l(t)=\lim_{x\to+\infty}\frac{\|\fA(t,x)\|}{\cM(x)},
$$ 
for all $\cE$ of the class if and only if DCT holds.
\end{theorem}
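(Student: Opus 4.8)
The plan is to reduce the asserted equivalence to a statement about \emph{intervals of constancy} (flat segments) of the type function, and then to identify such segments with the finiteness of $M(0)$, which by Theorem \ref{th61} is exactly the negation of DCT. First I would record that $\l(t)$ is continuous and non-decreasing (the exponential type is monotone in $t$, as observed after \eqref{et0}), with $\l(0)=0$ because $\fA(0,z)=I$, and $\l(t)\to\infty$ as $t\to\infty$ since the accumulated mass $\int_0^t H$ of a nontrivial reflectionless system diverges. Consequently $t\mapsto\l(t)$ is a bijection of $[0,\infty)$ onto itself if and only if it is strictly increasing, i.e. if and only if it has no flat segment. Thus the theorem is equivalent to the assertion: DCT holds if and only if no reflectionless $\cE$ of the class produces a flat segment of its type function.

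The key bridge I would establish is that the reflectionless class is invariant under the shift of the canonical system. The cocycle identity $\fA(s+t,z)=\fA(s,z)\,\fA_{\cE^{(s)}}(t,z)$ defines a spectral function $\cE^{(s)}$ which is again reflectionless on the same $E$: the defining boundary relation $\overline{\cE(x)}=-\cE_-(x)$ on $E$ together with holomorphy of $-(\cE+\cE_-)^{-1}$ in $\Omega$ is preserved by the flow (compare Lemma \ref{le42} and \eqref{W}). Hence a flat segment of $\l(\cdot)$ for $\cE$ at a parameter $s$ is a flat segment at the origin for $\cE^{(s)}$. But a flat segment at the origin means that the type of $\fA(t,\cdot)$ vanishes for $t\in(0,t_0]$, so by \eqref{sol0} the function $(AC)(t_0,z)/\bigl(zC'(t_0,0)\bigr)$ is a \emph{nonconstant} element of $\cB_E(0)$ normalized at the origin, of finite $L^1$-norm (the integrability being guaranteed by the reflectionless structure, as in \eqref{intac0}); thus $M(0)<\infty$. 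Conversely, when $M(0)<\infty$ the extremal function \eqref{eqno62} furnishes exactly such a flat segment at the origin for the associated extremal reflectionless $\cE$. Combining this with Theorem \ref{th61} ($M(0)<\infty$ if and only if DCT fails) yields the reduced equivalence, and hence the theorem.

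For the direct ``if'' implication under DCT I would make the type explicit rather than argue by contrapositive. Theorem \ref{th52} writes $\fB(t,z(\zeta))$ with the universal factor $e^{-it\Theta(z(\zeta))}$, and since $\Theta=-\cM_*+i\cM$ by \eqref{conjmart} one has $|e^{-it\Theta(x)}|=e^{t\cM(x)}$ on $\bbR_+$; the reproducing kernels in \eqref{eqn64} are of bounded type with vanishing exponential type, so $\log\|\fA(t,x)\|/\cM(x)\to t$ and therefore $\l(t)=t$ for \emph{every} reflectionless $\cE$ once DCT holds, a bijection with no flat segment, consistent with the reduction above. The main obstacle I anticipate is precisely this uniformity over the whole class: Theorem \ref{th52} is stated for the discrete resolvent family \eqref{eqn63} parametrized by $\{\delta_j\}$, whereas the reflectionless functions fill a continuous isospectral family. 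I would close the gap by using the shift to reach the full family from the discrete one (the flow sweeps the Dirichlet data through each gap) together with Lemmas \ref{l63} and \ref{l64}, whose normalizing transformations are $z$-independent and hence leave the type unchanged; the real content of the obstacle is to verify that all the growth is carried by the $\cE$-independent factor $e^{-it\Theta}$ and that the kernel ratios contribute no additional type.
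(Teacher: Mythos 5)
Your treatment of the ``DCT fails'' half is essentially the paper's own argument: the paper takes the extremal configuration $R$ of Theorem \ref{th61}, forms the reflectionless function $\cE(z)=\tfrac12\left(R(z)+\tfrac1z\right)$, and uses Lemmas \ref{l63}--\ref{l64} to conclude that for some $t_0>0$ one has $\fA(t_0,z)\begin{bmatrix}1\\0\end{bmatrix}=\begin{bmatrix}A(z)+\tfrac1z C(z)\\ 2C(z)\end{bmatrix}\tfrac1{1+C'(0)}$ with $A,C$ of zero type, so $\l\equiv0$ on $[0,t_0]$ and $\l$ is not injective. Your reduction of injectivity to the absence of flat segments is also fine.

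The genuine gap is in the direction ``DCT $\Rightarrow$ $\l$ is injective for every reflectionless $\cE$.'' The paper disposes of this by citation: \emph{all} reflectionless canonical systems are parametrized in \cite{YU} under DCT (Theorem \ref{th52} is only the special case reproduced in the paper), and in that parametrization all growth sits in the factor $e^{-it\Theta}$, so $\l(t)=t$. Neither of your two routes replaces this. Your contrapositive route hinges on the claim that a flat segment at the origin produces a nonconstant $F=(AC)(t_0,z)/\bigl(zC'(t_0,0)\bigr)\in\cB_E(0)$ of finite norm, and this claim is false for general canonical systems: if the Hamiltonian begins with an indivisible interval, say $H=\begin{bmatrix}0&0\\0&1\end{bmatrix}$ on $[0,t_0]$, then $\fA(t_0,z)=\begin{bmatrix}1&0\\-zt_0&1\end{bmatrix}$, whose type is zero (its norm grows only like $t_0|x|$ while $\cM(x)$ grows like a power of $x$), yet $(AC)(t_0,z)/\bigl(zC'(t_0,0)\bigr)\equiv1$, whose norm $\int_E dx/|x|$ diverges; with $H=\begin{bmatrix}1&0\\0&0\end{bmatrix}$ one even gets $C\equiv0$. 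Excluding such degeneracies requires invoking the reflectionless structure, which you never do. Moreover, membership in $\cB_E(0)$ also demands bounded characteristic in $\Omega$ (condition (i)), which for the entries of a general canonical system over a Widom domain is exactly the kind of deep property at stake when DCT may fail --- in the paper it is known only because $A,C$ are the explicit products \eqref{bd00} --- so assuming it risks circularity. The integrability step leaks as well: the derivation of \eqref{intac0} uses that $R\in\cR$ is purely imaginary on $E$, and for a general reflectionless $\cE$ the same Cauchy--Schwarz computation yields only $\int_E|AC|\,\frac{\Im\cE}{|\cE|}\,\frac{dx}{1+x^2}<\infty$, which is weaker. Finally, your proposed repair of the scope of Theorem \ref{th52} --- that the flow ``sweeps the Dirichlet data through each gap'' --- cannot work: the canonical-system flow is a single one-parameter motion on an infinite-dimensional isospectral family, so its orbit through the discrete configurations \eqref{eqn63} misses almost all reflectionless $\cE$; that scope issue is settled only by the full parametrization proved in \cite{YU}, which is what the paper actually uses.
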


\begin{proof}
If DCT holds all reflectionless canonical systems were parametrized in \cite{YU}, see Theorem 
\ref{th52}. If DCT fails we use the extremal function $R$, described in the previous theorem, to define the normalized reflectionless spectral function $\cE(z)=\frac 1 2\left(R(z)+\frac 1 z\right)$.
Then, for a certrain $t_0>0$,
$$
\fA(t_0,z)\begin{bmatrix}1\\ 0\end{bmatrix}=\begin{bmatrix}A(z)+\frac 1 z C(z)\\
2 C(z)
\end{bmatrix}\frac 1{1+C'(0)}
$$
and $\l(t_0)=0$. Moreover $\l(t)=0$ for all $t\in[0, t_0]$. Thus, for the given $\cE(z)$ the function $\l(t)$ is not one-to-one.
\end{proof}

\bibliographystyle{amsplain}

 \vspace{.1in}

Abteilung f\"ur Dynamische Systeme und Approximationstheorie,

Johannes Kepler Universit\"at Linz,

A--4040 Linz, Austria

Petro.Yudytskiy@jku.at

\end{document}